\documentclass[a4paper,10pt]{amsart}

\usepackage{a4}
\usepackage{graphics,graphicx}
\usepackage{amssymb,amsmath}
\usepackage[all]{xy}
\usepackage{stmaryrd}
\CompileMatrices
\usepackage[breaklinks]{hyperref}
\usepackage{enumitem}
\usepackage{xcomment} 

\usepackage{tikz}
\definecolor{light-light-gray}{gray}{0.95}
\definecolor{light-gray}{gray}{0.75}

\usetikzlibrary{patterns}
\usetikzlibrary{decorations.markings}
\usetikzlibrary{decorations.pathreplacing}
\usetikzlibrary{calc} 
\usepackage{graphicx} 


\newtheorem{theorem}{Theorem}[section]
\newtheorem{lemma}[theorem]{Lemma}
\newtheorem{proposition}[theorem]{Proposition}
\newtheorem{corollary}[theorem]{Corollary}
\newtheorem{conjecture}[theorem]{Conjecture}

\theoremstyle{definition}

\newtheorem{definition}[theorem]{Definition}
\newtheorem{example}[theorem]{Example}

\newtheorem{setting}[theorem]{Setting}
\newtheorem{construction}[theorem]{Construction}

\newtheorem{algo}[theorem]{Algorithm}
\newtheorem{remark}[theorem]{Remark}

\theoremstyle{remark}

\def\KK{\mathbb{K}}
\def\ZZ{\mathbb{Z}}

\def\QQ{\mathbb{Q}}

\def\sei{\mathrel{\mathop:}=}


\newcommand{\cone}[1]{\mathrm{cone}(#1)}

\renewcommand{\phi}{\varphi}

\setcounter{MaxMatrixCols}{30} 
\setcounter{tocdepth}{2}

\def\KK{{\mathbb K}}

\def\ZZ{{\mathbb Z}}

\def\QQ{{\mathbb Q}}
\def\PP{{\mathbb P}}

\def\Ample{{\rm Ample}}
\def\BPF{{\rm BPF}}
\def\Bs{{\rm Bs}}
\def\B{{\rm \mathbf B}}

\def\SAmple{{\rm SAmple}}
\def\cov{{\rm cov}}
\def\rlv{{\rm rlv}}
\def\Pic{{\rm Pic}}
\def\Cl{{\rm Cl}}
\def\WDiv{{\rm WDiv}}

\def\Supp{{\rm Supp}}
\def\Bs{{\rm Bs}}
\def\Spec{{\rm Spec}}

\def\conv{{\rm conv}}
\def\cone{{\rm cone}}
\def\lin{{\rm lin}}

\def\tt#1{\texttt{#1}}
\def\out#1{\begingroup\tiny\begin{gather*} #1 \end{gather*}\endgroup}

%
%
\usepackage{pict2e,picture}

\makeatletter
\newcommand{\pnrelbar}{%
  \linethickness{\dimen2}%
  \sbox\z@{$\m@th\prec$}%
  \dimen@=1.1\ht\z@
  \begin{picture}(\dimen@,.4ex)
  \roundcap
  \put(0,.2ex){\line(1,0){\dimen@}}
  \put(\dimexpr 0.5\dimen@-.2ex\relax,0){\line(1,1){.4ex}}
  \end{picture}%
}
\newcommand{\precneq}{\mathrel{\vcenter{\hbox{\text{\prec@neq}}}}}
\newcommand{\prec@neq}{%
  \dimen2=\f@size\dimexpr.04pt\relax
  \oalign{%
    \noalign{\kern\dimexpr.2ex-.5\dimen2\relax}
    $\m@th\prec$\cr
    \noalign{\kern-.5\dimen2}
    \hidewidth\pnrelbar\hidewidth\cr
  }%
}
\makeatother
%
%

\subjclass[2010]{14Q15, 20M14}

\begin{document}
\title[Algorithms for embedded monoids and base point free problems]
{Algorithms for embedded monoids and base point free problems}
\author[A.~Fahrner]{Anne Fahrner} 
\thanks{Supported by the Carl-Zeiss-Stiftung.}
\address{Mathematisches Institut, Universit\"at T\"ubingen,
Auf der Morgenstelle 10, 72076 T\"ubingen, Germany}
\email{fahrner@math.uni-tuebingen.de}

\begin{abstract}
We present algorithms for basic computations with 
monoids in finitely generated
abelian groups such as
monoid membership testing and computing an element
of the conductor ideal.
Applying them to Mori dream spaces,
we obtain algorithms to test whether a Weil divisor class of 
a given Mori dream space is base point free,
to compute generators of the monoid of base point
free Cartier divisor classes and to test whether 
a~$\QQ$-factorial
Mori dream space with known canonical class 
fulfills Fujita's base point free conjecture or not.
\end{abstract}

\maketitle

\section{Introduction}

A first part of this paper concerns \emph{embedded monoids}, 
that means finitely generated monoids in finitely generated
abelian groups, and thereby generalises ideas of the theory
on affine semigroups~\cite[Chapter 2]{brugu} to monoids with 
non-trivial torsion part.
We further present algorithms for embedded monoids, among others  
for computing generators of
intersections of embedded monoids and for computing an element
of the conductor ideal; see Algorithms~\ref{algo:inmon}~--~\ref{algo:pointcondid}.

In the second part of the paper, we apply these algorithms to base 
point free questions for Mori dream spaces.
Recall that Mori dream spaces, introduced by Hu and 
Keel~\cite{hukeel}, are characterized 
via their optimal behaviour with respect to the minimal model program. 
A particular interesting aspect of Mori dream spaces is 
their highly combinatorial structure~\cite{ADHL} -- in this regard 
they are a canonical generalisation of toric varieties.
Further well-known example classes are spherical varieties~\cite{BrKn},
smooth Fano varieties~\cite{BCHM} and all 
Calabi-Yau varieties of dimension at most three and
with polyhedral effective cone~\cite{McK}.
The combinatorial framework developed 
in~\cite{ADHL} allows algorithmic treatment of Mori dream spaces. 
Applying the aforementioned algorithms to Mori dream spaces,
we provide algorithms for testing whether a given Weil divisor class
is base point free and for computing
generators of the \emph{base point free monoid}, i.e.~the 
monoid of base point free Cartier divisor classes;
see Algorithms~\ref{algo:genBPF} and~\ref{algo:isbasepointfree}.

These algorithms, together with the non-emptyness of the 
conductor ideal of the base point free monoid,
play an important role in our main algorithm, 
Algorithm~\ref{algo:fujitabpf},
testing Fujita's base point free conjecture~\cite{fuconj}: 
this much studied conjecture claims
that for a smooth projective 
variety with canonical class~$\mathcal{K}_X$,
the Weil divisor class~$\mathcal{K}_X + m  \mathcal{L}$ is base 
point free for all ample
Cartier divisor classes~$\mathcal{L}$ and for all $m \geq \dim(X)+1$.
So far it is known to hold for smooth projective varieties 
up to dimension five~\cite{Re, EL1,Kaw,yezhu2}.
For toric varieties with arbitrary singularities,
Fujino~\cite{fujino} presented a proof of Fujita's
base point free conjecture.
Despite this substantial progress, Fujita's base point
free conjecture remains in general still open.
With Algorithm~\ref{algo:fujitabpf}, we provide a tool
for its algorithmic testing for~$\QQ$-factorial Mori dream spaces.
Since our algorithm makes use of the canonical class
$\mathcal{K}_X$, it applies to Mori dream
spaces with known $\mathcal{K}_X$.
This case appears quite often: for instance
if $X$ is spherical or
if its Cox ring is a complete intersection, see 
Remark~\ref{rem:knownKX} for details.

In~\cite{MonoidPackage}, we provide an implementation of our algorithms 
building on the two
{\tt{Maple}}-based software packages 
{\tt{convex}}~\cite{convex}
and
{\tt{MDSpackage}}~\cite{mdspackage}.
Using this implementation, we prove Fujita's base point free 
conjecture for a six-dimensional Mori dream space in
Example~\ref{ex:fujitabpf1},
and in Example~\ref{ex:fujitabpf2}, we study a locally factorial 
Mori dream space that does not fulfill
Fujita's base point free conjecture.
In addition, we study the more general question of the existence
of semiample Cartier divisor classes that are not base point free.
It is well-known that for Cartier divisors on complete toric varieties, 
semiampleness implies base point freeness.
For smooth rational projective varieties with a torus action 
of complexity one and Picard number two, the same statement
follows immediately 
from the classification done in~\cite{fahani}.
In Example~\ref{ex:smoothsemiamplenotbpf}, we present a 
first example of a smooth surface
of Picard number twelve admitting a semiample Cartier divisor with 
base points.

The author would like to thank J\"urgen Hausen for valuable 
discussions and comments. In addition, the author is grateful to the referees for their detailed and thorough review of the paper 
and for their highly appreciated comments and corrections.

\section{Embedded Monoids}

Let $K$ be a finitely generated abelian group.
We denote by $K = K^0 \oplus K^{\rm tor}$
the decomposition of $K$ into free and torsion part
and we write $K_\QQ := K \otimes_\ZZ \QQ$ for the 
associated rational vector space.
Note that each
$w\in K=K^0\oplus K^{\rm tor}$ can be
represented as $w=(w^0,w^{\rm tor})$
with unique elements $w^0\in K^0$ and
$w^{\rm tor} \in K^{\rm tor}$.
Every $w \in K$ defines an element~$w \otimes 1 \in K_\QQ$, 
which we denote as well by $w$ for short.
A \emph{cone} in a rational vector space always 
refers to a convex, polyhedral cone.
The relative interior of a cone $\tau \subseteq K_\QQ$ 
is denoted by $\tau^{\circ}$.

By an \emph{embedded monoid} we mean a pair 
$S \subseteq K$, where $S$ is a 
finitely generated submonoid of $K$.
For an embedded monoid $S \subseteq K$, we 
denote by 
$$
\cone(S) 
\ \sei \ 
\cone(w\otimes 1; \ w \in S) 
\ \subseteq \
K_\QQ
$$ 
the (convex, polyhedral) cone generated by 
the elements of $S$.
An embedded monoid $S \subseteq K$ 
is \emph{spanning} if $S$ generates $K$ 
as a group.
The \emph{saturation} of an embedded 
monoid $S \subseteq K$ is the embedded 
monoid
$$
\tilde S 
\ := \
\{w \in K; \; nw \in S \text{ for some } n \in \ZZ_{\ge 1}\}
\ \subseteq \ 
K\,.
$$ 
Let $S\subseteq K$ be an embedded monoid.
A non-empty set $M \subseteq K$ is called an \emph{$S$-module}
if~$S+M \subseteq M$ holds. We call an 
$S$-module $M$ \emph{ideal} if $M\subseteq S$ holds
and \emph{finitely generated} if there 
is a finite subset 
$\{m_1,\ldots,m_{\ell}\}\subseteq M$ with the property that
$M= \{m_1+s, \ldots,m_{\ell}+s; \ \, s\in S\}$ holds.

\begin{definition}\label{def:condid}
Let $S \subseteq K$ be an embedded monoid. 
The \emph{conductor ideal} of $S \subseteq K$ 
is the set
$$
c(\tilde S / S) 
\ \sei \
\{ x \in S; \ x+ \tilde S \subseteq S\}\,.
$$
\end{definition}

\goodbreak

\begin{figure}[ht]
\begin{tikzpicture}[scale=0.6]
    \path[fill=gray!20!] (0,0)--(5.7,0)--(5.7,5.2)--(2.6,5.2)--(0,0); 
    \draw  (0,0) -- (2.6,5.2); 
     \path[draw=gray!60!, fill=gray!60!]  (5.7, 5.2) -- (5.7,2.9) -- (3,2.9) -- (2.9,3) -- (3.1,3.4) -- (2.5,3.4) --(2.4, 3.5)-- (2.6,3.9) -- (2.5,3.9) --(2.4,4) -- (3,5.2) -- (5.7, 5.2); 
    \coordinate (Origin)   at (0,0);
    \coordinate (XAxisMin) at (-0.65,0);
    \coordinate (XAxisMax) at (6.1,0);
    \coordinate (YAxisMin) at (0,-0.65);
    \coordinate (YAxisMax) at (0,5.5);
    \draw [thick, -latex] (XAxisMin) -- (XAxisMax);
    \draw [thick, -latex] (YAxisMin) -- (YAxisMax);
    \foreach \x in {-1,0,1,2,...,11}{
      \foreach \y in {-1,0,1,2,...,10}{
        \node[draw,circle,inner sep=0.5pt,fill=black] at (0.5*\x,0.5*\y) {};
         }}
    \foreach \x in {0,1,2,...,11}{
      \foreach \y in {0}{
        \node[draw,circle,inner sep=1pt,fill=black] at (0.5*\x,0.5*\y) {};
         }}
    \foreach \x in {2,...,11}{
      \foreach \y in {3}{
        \node[draw,circle,inner sep=1pt,fill=black] at (0.5*\x,0.5*\y) {};
         }}
    \foreach \x in {2,...,11}{
      \foreach \y in {4}{
        \node[draw,circle,inner sep=1pt,fill=black] at (0.5*\x,0.5*\y) {};
         }}
    \foreach \x in {3,...,11}{
      \foreach \y in {5}{
        \node[draw,circle,inner sep=1pt,fill=black] at (0.5*\x,0.5*\y) {};
         }}
    \foreach \x in {3,...,11}{
      \foreach \y in {6}{
        \node[draw,circle,inner sep=1pt,fill=black] at (0.5*\x,0.5*\y) {};
         }}
    \foreach \x in {4,...,11}{
      \foreach \y in {7}{
        \node[draw,circle,inner sep=1pt,fill=black] at (0.5*\x,0.5*\y) {};
         }}
    \foreach \x in {4,...,11}{
      \foreach \y in {8}{
        \node[draw,circle,inner sep=1pt,fill=black] at (0.5*\x,0.5*\y) {};
         }}
    \foreach \x in {5,...,11}{
      \foreach \y in {9}{
        \node[draw,circle,inner sep=1pt,fill=black] at (0.5*\x,0.5*\y) {};
         }}
    \foreach \x in {5,...,11}{
      \foreach \y in {10}{
        \node[draw,circle,inner sep=1pt,fill=black] at (0.5*\x,0.5*\y) {};
         }}
    \foreach \x in {1,2,...,11}{
      \foreach \y in {1,2}{
        \node[draw,circle,inner sep=1.4pt,fill=white] at (0.5*\x,0.5*\y) {};
         }}   
    \foreach \x in {1,...,4}{
      \foreach \y in {0}{
        \node[draw,circle,inner sep=1.4pt,fill=white] at (0.5*\x,0.5*\y) {};
         }}  
    \foreach \x in {6,...,9,11}{
      \foreach \y in {0}{
        \node[draw,circle,inner sep=1.4pt,fill=white] at (0.5*\x,0.5*\y) {};
         }}  
    \foreach \x in {3,4,6,7,8,9,11}{
      \foreach \y in {5}{
        \node[draw,circle,inner sep=1.4pt,fill=white] at (0.5*\x,0.5*\y) {};
         }}  
    \foreach \x in {2}{
      \foreach \y in {3}{
        \node[draw,circle,inner sep=1.4pt,fill=white] at (0.5*\x,0.5*\y) {};
         }}
    \foreach \x in {6}{
      \foreach \y in {4}{
        \node[draw,circle,inner sep=1.4pt,fill=white] at (0.5*\x,0.5*\y) {};
         }}
    \foreach \x in {4}{
      \foreach \y in {6,7}{
        \node[draw,circle,inner sep=1.4pt,fill=white] at (0.5*\x,0.5*\y) {};
         }}
     \foreach \x in {5}{
      \foreach \y in {6,9}{
        \node[draw,circle,inner sep=1.4pt,fill=white] at (0.5*\x,0.5*\y) {};
         }}

\node[draw,circle,inner sep=1pt,fill=black] at (6.9,2.9) {};  
\node[right] at (7.1,2.9) {\tiny{element of $S$}}; 

\node[draw,circle,inner sep=1.4pt,fill=white] at (6.9,2.2) {}; 
\node[right] at (7.1,2.2) {\tiny{element of $\tilde{S}\setminus S$}};         

\node[draw, rectangle, fill=gray!20!] at (6.9,1.55) {}; 
\node[right] at (7.1,1.5) {\tiny{$\cone(S)$}};     

\node[draw, rectangle, draw= gray!60!, fill=gray!60!] at (6.9,.85) {}; 
\node[right] at (7.1,.8) {\tiny{element of $c(\tilde{S}/S)$}};  
\node[draw,circle,inner sep=1pt,fill=black] at (6.9,.85) {};  

  \end{tikzpicture}
\end{figure}

\goodbreak

\begin{lemma}\label{lem:tildeSfgmodule}
Let $S \subseteq K$ be an embedded 
monoid. Consider elements $x_1, \ldots, x_r \in S$ 
such that
$\{x_1 \otimes 1,\ldots,x_r \otimes 1 \}$
is a set of generators for~$\cone(S)$.
Then the finite set
$$
M
\ \sei \
\iota^{-1}\left(
\{ \sum_{i=1}^r\alpha_i (x_i \otimes 1); \ \;
 \alpha_i \in \QQ, \ 0\le\alpha_i\le 1 \}
\right)
\ \subseteq \ K \, ,
$$
where $\iota$ is the map
$\iota\colon K \to K\otimes\QQ,\ w\mapsto w\otimes1$,
generates~$\tilde S$ as an $S$-module.
In particular,~$\tilde{S}$ is a finitely 
generated~$S$-module.
\end{lemma}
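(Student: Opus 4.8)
The plan is to reduce the statement to the identification of the saturation with the lattice points of the cone, namely $\tilde S = \iota^{-1}(\cone(S))$, and then to exhibit the claimed generators by a floor-and-fractional-part decomposition. Throughout, I abbreviate $P \sei \{\sum_{i=1}^r \alpha_i (x_i \otimes 1);\ \alpha_i \in \QQ,\ 0 \le \alpha_i \le 1\} \subseteq K_\QQ$, so that $M = \iota^{-1}(P)$ by definition.

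First I would settle finiteness of $M$. The set $P$ is the image of the rational unit cube under the linear map $\QQ^r \to K_\QQ$ sending the $i$-th standard basis vector to $x_i \otimes 1$, hence $P$ is bounded. Now $\iota\colon K \to K_\QQ$ has kernel $K^{\rm tor}$, which is finite, while its image $\iota(K) \cong K^0$ is a full-rank lattice in $K_\QQ$. A bounded subset of $K_\QQ$ meets this lattice in finitely many points, and the $\iota$-preimage of each such point is a single coset of the finite group $K^{\rm tor}$; therefore $M = \iota^{-1}(P)$ is finite.

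Next comes the characterization $\tilde S = \iota^{-1}(\cone(S))$. The inclusion ``$\subseteq$'' is immediate, since $nw \in S$ forces $w \otimes 1 = \tfrac1n(nw \otimes 1) \in \cone(S)$. For ``$\supseteq$'' take $w \in K$ with $w \otimes 1 \in \cone(S)$ and write $w \otimes 1 = \sum_i \lambda_i (x_i \otimes 1)$ with $\lambda_i \in \QQ_{\ge 0}$; clearing denominators yields an $n \ge 1$ with all $n\lambda_i \in \ZZ_{\ge 0}$, so that $nw - \sum_i n\lambda_i x_i \in \ker\iota = K^{\rm tor}$. Here I expect the only genuine subtlety relative to the torsion-free affine semigroup setting of \cite[Chapter 2]{brugu}: the element $nw$ itself need not lie in $S$, but only in $S$ up to torsion. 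The remedy is to multiply once more by the exponent $m$ of $K^{\rm tor}$, giving $mnw = \sum_i mn\lambda_i x_i \in S$ and hence $w \in \tilde S$. This torsion-clearing step is the main obstacle and is precisely where the generalisation beyond affine semigroups is needed.

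Finally I would establish the $S$-module equality $\tilde S = M + S$. The inclusion $M + S \subseteq \tilde S$ holds because $\tilde S$ is a monoid containing $S$ and because $M \subseteq \tilde S$, the latter following from $P \subseteq \cone(S)$ together with the characterization above. For the reverse inclusion, let $w \in \tilde S$ and write $w \otimes 1 = \sum_i \beta_i (x_i \otimes 1)$ with $\beta_i \in \QQ_{\ge 0}$. Splitting each $\beta_i = \lfloor \beta_i \rfloor + \{\beta_i\}$ into integral and fractional part and putting $s \sei \sum_i \lfloor \beta_i \rfloor x_i \in S$, one finds $\iota(w - s) = \sum_i \{\beta_i\}(x_i \otimes 1) \in P$, so $w - s \in M$ and $w = (w-s) + s \in M + S$. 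This proves $\tilde S = M + S$, and finiteness of $M$ then gives the asserted finite generation of $\tilde S$ as an $S$-module.
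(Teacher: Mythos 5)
Your argument is correct and takes essentially the same approach as the paper, whose entire proof consists of citing Gordan's Lemma \cite[Proposition 1.2.17]{CLS} for torsion-free $K$ and remarking that it ``extends easily'' to finitely generated abelian groups. Your floor-and-fractional-part decomposition is exactly the standard Gordan argument, and your two torsion-specific additions --- clearing torsion by multiplying by the exponent of $K^{\rm tor}$ to establish $\tilde S=\iota^{-1}(\cone(S))$, and observing that the fibres of $\iota$ are finite cosets of $K^{\rm tor}$ so that $M$ is finite --- are precisely the details the paper leaves implicit.
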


\begin{proof}
In the case of a torsion free
group $K$, the statement on the finite 
generation of $\tilde S$ as an $S$-module
is Gordan's Lemma~\cite[Proposition 1.2.17]{CLS}.
The proof extends easily to the case 
of finitely generated abelian groups.
\end{proof}

\begin{proposition}\label{prop:condid}
Let $S \subseteq K$ be an embedded monoid. 
If $S\subseteq K$ is spanning, then the conductor ideal
$c(\tilde{S}/S)$ is non-empty, i.e.~it is in particular an $S$-module.
\end{proposition}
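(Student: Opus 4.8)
The plan is to exploit the finite generation of $\tilde S$ as an $S$-module, provided by Lemma~\ref{lem:tildeSfgmodule}, in order to reduce the non-emptiness of $c(\tilde S/S)$ to a finite condition, which the spanning hypothesis then lets us satisfy. Concretely, I would fix a finite set $\{m_1, \ldots, m_\ell\} \subseteq \tilde S$ generating $\tilde S$ as an $S$-module, so that every element of $\tilde S$ has the form $m_i + s$ for some index $i$ and some $s \in S$. I observe first that to produce an element of the conductor ideal it suffices to find a single $x \in S$ with $x + m_i \in S$ for every~$i$: indeed, an arbitrary element of $x + \tilde S$ then reads $(x + m_i) + s$ with $x + m_i \in S$ and $s \in S$, and hence lies in $S$ because $S$ is closed under addition.

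It remains to locate such an $x$, and this is exactly where the spanning hypothesis enters. Since $S$ is a submonoid, the subgroup of $K$ it generates is precisely the set of differences $S - S = \{a - b; \ a, b \in S\}$; spanning therefore means $K = S - S$. In particular, each generator $m_i \in \tilde S \subseteq K$ can be written as $m_i = a_i - b_i$ with $a_i, b_i \in S$, whence $b_i + m_i = a_i \in S$. Setting $x := b_1 + \cdots + b_\ell$, which lies in $S$, I obtain for each index $i$ that $x + m_i = a_i + \sum_{j \neq i} b_j \in S$, being a sum of elements of $S$. By the reduction above, $x + \tilde S \subseteq S$, so $x \in c(\tilde S/S)$ and the conductor ideal is non-empty.

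The only genuinely delicate point is the role of the spanning assumption: it is precisely the identity $K = S - S$ that allows each module generator of $\tilde S$ to be absorbed into $S$ after translation by an element of $S$. Without it the statement fails, as the example $K = \ZZ$, $S = 2\ZZ_{\geq 0}$ shows, where $\tilde S = \ZZ_{\geq 0}$ but no even integer $x$ satisfies $x + 1 \in S$, so $c(\tilde S/S) = \emptyset$. Thus the main work here is conceptual rather than computational: recognizing that the finitely many $S$-module generators of $\tilde S$ form the only obstruction, and that the passage to differences afforded by spanning removes it.
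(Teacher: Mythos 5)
Your proof is correct and takes essentially the same route as the paper's: both invoke Lemma~\ref{lem:tildeSfgmodule} to obtain finitely many $S$-module generators $m_i$ of $\tilde S$, use the spanning hypothesis to write each $m_i$ as a difference of two elements of $S$, and verify that the sum of the subtrahends $b_1+\cdots+b_\ell$ lies in $c(\tilde S/S)$. Your explicit reduction to checking the generators and the counterexample $S=2\ZZ_{\ge 0}\subseteq\ZZ$ are nice touches but do not alter the argument.
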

\begin{proof}
By definition, $S +c(\tilde S / S)  \subseteq c(\tilde S / S)$
holds, i.e.~we only have to show that 
$c(\tilde S / S) $ is non-empty.
In case of a torsion free group $K$, one can find 
a proof in~\cite[Proposition 2.33]{brugu}.
For finitely generated abelian groups
we may extend the proof as follows:
According to Lemma~\ref{lem:tildeSfgmodule},
we have
$\tilde S= \{m_1+s, \ldots,m_{\ell}+s; \ s\in S\}$
with some finite 
subset~$\{m_1,\ldots,m_{\ell}\} \subseteq \tilde S$.
By assumption, the embedded monoid $S\subseteq K$ 
is spanning. This yields
representations $m_i =x_i-y_i$ with 
$x_i, y_i \in S$. We claim that 
$z \sei \sum_{i=1}^\ell y_i$ is contained in the
conductor ideal $c(\tilde S / S)$. Indeed
$$
z+m_j = \sum_{
\substack{1\le i\le \ell\\
i\ne j}
}
y_i +x_j 
\ \in \
S
$$
holds for all $1\le j \le \ell$, i.e.
we have $z+\tilde S \subseteq S$.
\end{proof}

\begin{lemma}
\label{lem:intersection}
Let $K$ be a finitely generated abelian 
group and consider two subgroups~$K_1,K_2 \subseteq K$. 
Let $S_i \subseteq K_i$ be embedded monoids with
saturations $\tilde S_i$.
Then the following holds for the intersection
$S_{12} \sei S_1 \cap S_2$:
\begin{enumerate}
\item The intersection $S_1 \cap S_2 \subseteq K_1 \cap K_2$ 
is an embedded monoid.
\item
We have $\tilde S_{12}= 
\tilde S_1 \cap \tilde S_2$,
where $\tilde S_{12}$ denotes the
saturation of the embedded monoid
$S_{12} \subseteq K_1\cap K_2$.
\item
We have
$c(\tilde S_1/S_1)\cap c(\tilde S_2/S_2)
\subseteq 
c(\tilde S_{12}/S_{12})$.
\end{enumerate}
\end{lemma}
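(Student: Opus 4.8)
The statement naturally splits into the finite-generation assertion of part~(i), which is the substantial point, and the essentially formal claims~(ii) and~(iii). The plan is to first dispatch the easy half of~(i): for $s,t\in S_{12}$ one has $s+t\in S_1$ and $s+t\in S_2$ because each $S_i$ is a monoid, and $0\in S_1\cap S_2$; together with $S_i\subseteq K_i$ this shows that $S_{12}$ is a submonoid of $K_1\cap K_2$. It then remains to prove that $S_{12}$ is finitely generated, and this is the one genuine obstacle.

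For finite generation I would pass to the diagonal. Writing $\delta\colon K\times K\to K,\ (a,b)\mapsto a-b$, the assignment $s\mapsto(s,s)$ identifies $S_{12}$ with $(S_1\times S_2)\cap\ker(\delta)$. Since $S_1\times S_2$ is a finitely generated submonoid of the finitely generated abelian group $K\times K$, I can choose a surjective monoid homomorphism $\pi\colon\ZZ_{\ge 0}^{k}\to S_1\times S_2$ arising from a group homomorphism $\ZZ^{k}\to K\times K$. Setting $L\sei\pi^{-1}(\ker\delta)$, a subgroup of $\ZZ^{k}$, one checks that $\pi\big(\ZZ_{\ge 0}^{k}\cap L\big)=(S_1\times S_2)\cap\ker(\delta)$. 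Now $\ZZ_{\ge 0}^{k}\cap L$ is the set of points of the sublattice $L$ lying in the rational polyhedral cone $\RR_{\ge 0}^{k}$, hence finitely generated by Gordan's Lemma; its image under $\pi$ is therefore a finitely generated monoid, and so is $S_{12}$. What makes this work is the reduction to the free lattice $\ZZ^{k}$, where Gordan applies directly and the torsion of $K$ has been absorbed into the presentation $\pi$; I would emphasise that finiteness cannot merely be read off from finiteness of the saturation, since a non-finitely-generated monoid can have a finitely generated saturation. Alternatively, one may invoke the torsion-free intersection result for affine semigroups from~\cite{brugu} and extend it to the present setting.

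For part~(ii) I would argue by double inclusion. If $w\in\tilde S_{12}$ then $w\in K_1\cap K_2$ and $nw\in S_{12}=S_1\cap S_2$ for some $n\ge 1$; reading this in $S_1$ and in $S_2$ separately gives $w\in\tilde S_1\cap\tilde S_2$. Conversely, if $w\in\tilde S_1\cap\tilde S_2$ then $w\in K_1\cap K_2$ because $\tilde S_i\subseteq K_i$, and there are $n_1,n_2\ge 1$ with $n_iw\in S_i$; since each $S_i$ is closed under addition, $n_1n_2\,w=n_2(n_1w)\in S_1$ and $n_1n_2\,w=n_1(n_2w)\in S_2$, so $n_1n_2\,w\in S_{12}$ and thus $w\in\tilde S_{12}$.

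Finally, for part~(iii) let $x\in c(\tilde S_1/S_1)\cap c(\tilde S_2/S_2)$. Then $x\in S_1\cap S_2=S_{12}$, and for every $w\in\tilde S_{12}$ part~(ii) gives $w\in\tilde S_1$ and $w\in\tilde S_2$, whence $x+w\in S_1$ and $x+w\in S_2$, i.e.~$x+w\in S_{12}$. Thus $x+\tilde S_{12}\subseteq S_{12}$, which is precisely the statement $x\in c(\tilde S_{12}/S_{12})$, as required.
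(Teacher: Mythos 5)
Your proposal is correct. For parts~(ii) and~(iii) your argument is the same as the paper's: the double inclusion for the saturation via $n_1n_2w$, and the deduction of~(iii) from~(ii) by checking $x+\tilde S_{12}\subseteq S_{12}$ componentwise. The genuine difference is in part~(i): the paper simply cites \cite[Proposition~1.1.2.2]{ADHL} for finite generation of $S_1\cap S_2$, whereas you supply a complete argument — identifying $S_{12}$ with $(S_1\times S_2)\cap\ker(\delta)$ for $\delta(a,b)=a-b$, lifting through a presentation $\pi\colon\ZZ_{\ge 0}^{k}\to S_1\times S_2$ induced by a group homomorphism $\ZZ^{k}\to K\times K$, and applying Gordan's Lemma to the sublattice $L=\pi^{-1}(\ker\delta)$ inside the orthant. (One cosmetic point: $\pi^{-1}(\ker\delta)$ should be read as the preimage under the underlying group homomorphism on $\ZZ^{k}$, as you evidently intend.) This is in fact precisely the mechanism the paper itself uses later, in the correctness proof of Algorithm~\ref{algo:genintmon}, where $\ker(\beta)$ with $\beta=\psi\circ\varphi$ and $\ker\psi=\Delta$ plays the role of your $L$; so your self-contained proof buys independence from the external reference and makes the lemma effectively constructive, anticipating exactly how the paper computes generators of the intersection, at the cost of a longer argument where the paper is content to defer. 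Your side remark that finite generation cannot be inferred from finite generation of the saturation is accurate and a worthwhile caution.
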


\begin{proof}
For~(i), only the finite generation of $S_1 \cap S_2$ 
needs some explanation, see for instance~\cite[Proposition~1.1.2.2]{ADHL}.
To prove the first inclusion of~(ii), let $x\in \tilde S_{12}$.
This means that we have~$x \in K_1\cap K_2$ and that there is 
$n\in\ZZ_{\ge 1}$ such that $nx \in S_1\cap S_2$ holds.
Clearly, this shows $x \in \tilde{S_1}\cap\tilde{S_2}$.
To prove the second inclusion, let 
$x \in \tilde{S_1}\cap\tilde{S_2}$. Hence~$x \in K_1\cap K_2$
holds and there are $n_1,\,n_2 \in \ZZ_{\ge 1}$ such that
$n_ix \in S_i, \, i=1,2$ hold. This means that
$n_1n_2x$ is contained in $S_1\cap S_2$, i.e.~we have 
$x \in \tilde{S}_{12}$. 
For~(iii), consider an element
$x\in c(\tilde S_1/S_1)\cap c(\tilde S_2/S_2)$.
This means that $x$ is contained in the intersection
$S_{12}$ and that
$x+\tilde S_i \subseteq S_i$ holds. 
With~(ii), we conclude that
$x+\tilde S_{12}$ is contained in $S_{12}$, 
i.e.~the conductor ideal 
of~$S_{12} \subseteq \lin_{\ZZ}(S_{12})$
contains $x$.
\end{proof}

Note that the following proposition is not true if we 
skip the condition that $\cone(S_1)^\circ$ and~$\cone(S_2)^\circ$
intersect non-trivially:
For instance, $\ZZ_{\ge 0}\subseteq\ZZ$ and~$\ZZ_{\le 0} \subseteq\ZZ$
define spanning embedded monoids, but the intersection 
$\{0\} \subseteq\ZZ$ is not spanning.

\begin{proposition}
\label{prop:intersection}
Let $K_1$ and~$K_2$ be subgroups of a finitely generated abelian 
group~$K$ and consider embedded monoids $S_i \subseteq K_i, \, i=1,2$.
If $\cone(S_1)^\circ \cap \cone(S_2)^\circ$ is non-empty
and $S_i \subseteq K_i$ is spanning for $i = 1,2$,
then $S_1 \cap S_2 \subseteq K_1 \cap K_2$ is a 
spanning embedded monoid.
\end{proposition}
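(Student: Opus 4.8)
The plan is to reduce the statement to two assertions: first, that the saturation $\tilde S_{12}$ already generates $L \sei K_1 \cap K_2$ as a group, and second, that the conductor ideal $c(\tilde S_{12}/S_{12})$ is non-empty. These combine cleanly: a conductor element $z$ satisfies $z \in S_{12}$ and $z + \tilde S_{12} \subseteq S_{12}$, so every $w \in \tilde S_{12}$ equals $(z+w) - z$ with both summands in $S_{12}$, giving $\tilde S_{12} \subseteq \lin_\ZZ(S_{12})$ and hence $L = \lin_\ZZ(\tilde S_{12}) \subseteq \lin_\ZZ(S_{12}) \subseteq L$. By Lemma~\ref{lem:intersection}~(i) the intersection $S_{12}$ is an embedded monoid, so this would establish that it is spanning.

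First I would set up the geometry. Since tensoring with $\QQ$ is exact, one has $(K_1)_\QQ \cap (K_2)_\QQ = L_\QQ$, so the hypothesised common point $p \in \cone(S_1)^\circ \cap \cone(S_2)^\circ$ lies in $L_\QQ$; a small ball around $p$ inside $L_\QQ$ lies in both cones, so $\sigma \sei \cone(S_1) \cap \cone(S_2)$ is a full-dimensional rational cone in $L_\QQ$ with $p$ in its relative interior $\sigma^\circ$. Writing each saturation as $\tilde S_i = \cone(S_i) \cap K_i$ (immediate from the definition, using that $\cone(S_i)$ is rational polyhedral) and invoking Lemma~\ref{lem:intersection}~(ii), I get $\tilde S_{12} = \tilde S_1 \cap \tilde S_2 = \sigma \cap L$. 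To see that $\sigma \cap L$ spans $L$, I would pick a lattice point $q \in \sigma^\circ \cap L$: for any $\ell \in L$ both $Nq$ and $Nq + \ell$ lie in $\sigma \cap L$ once $N \gg 0$, because $q + \ell/N \to q \in \sigma^\circ$ and $\sigma^\circ$ is open; thus $\ell$ is a difference of elements of $\tilde S_{12}$.

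The crux, and the main obstacle, is the non-emptiness of $c(\tilde S_{12}/S_{12})$; this is exactly the point where the interior-intersection hypothesis is indispensable, as the preceding $\ZZ_{\ge 0},\,\ZZ_{\le 0}$ remark shows. My approach is to produce a common element of the two individual conductor ideals and then appeal to Lemma~\ref{lem:intersection}~(iii). By Proposition~\ref{prop:condid} each $c(\tilde S_i/S_i)$ is non-empty; fixing $z_i \in c(\tilde S_i/S_i)$, a short check gives $z_i + \tilde S_i \subseteq c(\tilde S_i/S_i)$, since for $w \in \tilde S_i$ one has $(z_i+w)+\tilde S_i = z_i + (w + \tilde S_i) \subseteq z_i + \tilde S_i \subseteq S_i$. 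Now I use the interior lattice point $q$ from the previous paragraph: for large $N$ the element $Nq - z_i$ lies in $\cone(S_i) \cap K_i = \tilde S_i$, because $q - z_i/N \to q \in \cone(S_i)^\circ$; hence $Nq \in z_i + \tilde S_i \subseteq c(\tilde S_i/S_i)$ for both $i=1,2$ simultaneously once $N$ is large enough.

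Putting these together, $Nq$ lies in $c(\tilde S_1/S_1) \cap c(\tilde S_2/S_2)$, so Lemma~\ref{lem:intersection}~(iii) yields $Nq \in c(\tilde S_{12}/S_{12})$ and the conductor ideal of $S_{12}$ is non-empty. Feeding this back into the reduction of the first paragraph completes the argument. I expect all steps except the conductor-intersection to be routine; the real content is recognising that a sufficiently deep common interior lattice point is forced into both conductor ideals, which is precisely what the hypothesis on the cone interiors provides.
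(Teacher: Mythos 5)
Your argument is correct and is essentially the paper's own: both proofs rest on Proposition~\ref{prop:condid}, on the observation that each conductor ideal $c(\tilde S_i/S_i)$ absorbs $\tilde S_i$ (and hence contains a shifted copy of it), and on scaling a lattice point of the common interior until it lies in $c(\tilde S_1/S_1)\cap c(\tilde S_2/S_2)$, after which Lemma~\ref{lem:intersection}~(iii) finishes the job --- the paper merely packages the spanning step differently, shifting an entire generating set $\{mx, mx+b_1,\ldots,mx+b_r\}$ of $K_1\cap K_2$ into $c(\tilde S_1/S_1)\cap c(\tilde S_2/S_2)$, where your version splits this into ``$\tilde S_{12}$ spans $K_1\cap K_2$'' plus ``one conductor element exists''. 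One step of yours should be tightened: you choose $q$ only with $q\otimes 1\in\sigma^\circ$ but later use $q\otimes 1\in\cone(S_i)^\circ$, which is legitimate here because $\sigma^\circ=\cone(S_1)^\circ\cap\cone(S_2)^\circ$ whenever the right-hand side is non-empty (the standard relative-interior identity for intersections of convex sets), or can be avoided altogether by picking $q$ with $q\otimes 1\in\cone(S_1)^\circ\cap\cone(S_2)^\circ$ from the outset, exactly as the paper does with its point~$x$.
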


\begin{proof}
We denote by~$S_{12}$ the intersection of~$S_1$ and~$S_2$. 
Note that $S_{12}\subseteq K_1\cap K_2$ 
is an embedded monoid by Lemma~\ref{lem:intersection}~(i).
Clearly, the group generated by~$S_{12}$ is contained in $K_1 \cap K_2$. It 
remains to show the opposite inclusion. 
We denote by~$\iota_1,\iota_2$
and~$\iota_{12}$ the maps defined by~$w\mapsto w\otimes 1$
fitting into the following diagramm:
$$
\begin{xy}
  \xymatrix @!C =1.15 cm @R=.5cm{
      K_1 \ar[rr]^{\hspace{-.6cm}\iota_1}   
      & &
      K_1\otimes\QQ  \ar@{}[r]|-*[@]{\supseteq}
      &
      \cone(S_1)
      \\
      K_1\cap K_2
      \ar@{}[u]|{\text{   \rotatebox{90}{$\subseteq$}    }}
      \ar[rr]^{\hspace{-.6cm}\iota_{12}} 
      \ar@{}[d]|{\text{   \rotatebox{90}{$\supseteq$}  }} 
      & &
      (K_1\cap K_2)\otimes\QQ
      \ar@{}[u]|{\text{   \rotatebox{90}{$\subseteq$}    }}
      \ar@{}[d]|{\text{   \rotatebox{90}{$\supseteq$}    }} 
      &
     \hspace{4cm} \tau\sei\cone(S_1)^\circ \cap \cone(S_2)^\circ
     \ar@{}[l]|-*[@]{\hspace{4cm}\supseteq}
      \\  
      K_2 \ar[rr]^{\hspace{-.6cm}\iota_2}             
      &  & 
      K_2\otimes\QQ 
      \ar@{}[r]|-*[@]{\supseteq}
      &
      \cone(S_2)\, .  
  }
\end{xy}
$$
Because of
$\tau\neq \emptyset$,
the rank of~$K_1\cap K_2$ and the dimension of~$\tau$ coincide.
Thus there are elements
$$
b_1,\ldots,b_r
\ \in \ 
\iota_{12}^{-1}(\tau) 
\ \subseteq \
\iota_1^{-1}(\cone(S_1)) \cap \iota_2^{-1}(\cone(S_2)) 
\ = \ 
\tilde{S_1}\cap\tilde{S_2}
$$
generating~$K_1\cap K_2$ as a group.
Furthermore $\tau \ne \emptyset$ implies that there is 
an element $x\in K_1\cap K_2$ such that
$x\otimes 1 \in \tau$ holds.
Recall that~$S_i\subseteq K_i$ are spanning monoids and thus
Proposition~\ref{prop:condid} shows that their conductor ideals
are non-empty. Since~$c(\tilde S_i/S_i)$ contains some
shifted copy of~$\tilde{S_i}$, there are some $m_i\in\ZZ_{\ge 1}$, $i=1,2$,
such that the integer multiple~$m_ix$ 
is contained in $c(\tilde S_i/S_i)$, $i=1,2$.
Hence there is some positive integer~$m\in\ZZ_{\ge 1}$
such that~$C$ contains the set of 
generators~$\{mx, mx+b_1,\ldots,mx+b_r\}$ 
for $K_1\cap K_2$.
It follows that
$$
K_1 \cap K_2 
\ = \
\lin_{\ZZ}(C)
\ \subseteq \
\lin_{\ZZ}(c(\tilde S_{12}/S_{12}))
\ \subseteq \
\lin_{\ZZ}(S_{12})
$$
holds, where the inclusion in the middle was shown in
Lemma~\ref{lem:intersection}~(iii)
and the inclusion on the right-hand side follows
since~$c(\tilde S_{12}/S_{12})$ is non-empty by the same Lemma
and thus contains some shifted copy of~$S_{12}$.
\end{proof}

In the following we describe some algorithms for monoids which, 
applied to Mori dream spaces, can be used for computing the base 
point free  monoid $\BPF(X)$, for testing whether a Cartier divisor
class is base point free and for computing a point of the 
conductor ideal of $\BPF(X) \subseteq \Pic(X)$.

\begin{algo}[inMonoid] \label{algo:inmon}
\textit{Input:} A finitely generated abelian group~$K^\prime$, 
generators~$s^\prime_1,\ldots,s^\prime_{t^\prime}\in K^\prime$
of an embedded monoid 
$S^\prime\sei\lin_{\ZZ_{\ge 0}}(s^\prime_1,\ldots,s^\prime_{t^\prime})\subseteq K^\prime$ and 
an element $w^\prime\in K^\prime$.\\
\textit{Output:} \textit{True} if $w^\prime$ is contained in $S^\prime$. 
Otherwise, \textit{false} is returned.
\begin{itemize}
\item 
By excluding the generators $s_i^\prime$ that equal~$0_K$, we achieve
a representation
$S^\prime\sei\lin_{\ZZ_{\ge 0}}(s^\prime_1,\ldots,s^\prime_t)$ 
with a natural number~$t\in\ZZ_{\le {t^\prime}}$ and with 
non-zero elements~$s_i^\prime$.
\item
We compute a canonical representation of the embedded monoid~$S^\prime\subseteq K^\prime$:
\begin{itemize}
\item Compute~$r,\tilde{r}\in\ZZ_{\ge 0}$ such that there is 
an isomorphism of groups
$\varphi\colon K^\prime \to K\sei \ZZ^r \oplus \bigoplus_{k=1}^{\tilde{r}} \ZZ/ a_i\ZZ$.
\item
Let~$S\sei\lin_{\ZZ_{\ge 0}}(s_1,\ldots, s_t)\subseteq K$,
where we set $s_i\sei\varphi(s_i^\prime)\in K$.
\item Set $w\sei\varphi(w^\prime)\in K$.
\end{itemize}
\item 
Let $Q \colon \ZZ^t\to K$ denote the homomorphism mapping 
$x=(x_1,\ldots,x_t)\in\ZZ^t$ to the integer combination $\sum x_i s_i$.
Denote by $Q^0$ the free part of $Q$, i.e.~with the projection 
$\pi\colon K \to K^0 = K / K^{\rm tor}$, we have $\pi\circ Q = Q^0$.
\item 
Compute the polyhedron $\mathcal{B} \, \sei \, (Q^0)^{-1}(w^0) \, \cap \, \QQ^t_{\ge 0}$.
\item If $\mathcal{B}$ is not bounded, then 
\begin{itemize}
\item for all $1\le i\le t$ do
\begin{itemize}
\item if $s_i^0 = 0_{K^0}$ holds, then let
$\mathcal{C}\sei\{1\le k\le \tilde{r}; \ \, s_{ir+k}\neq 0\}$
and
\vspace{-0.1cm}
$$ \mathcal{B} \ \sei \ \mathcal{B} \cap \{x \in \QQ^t; \; x_i \le \prod_{k \in \mathcal{C}} a_k \}\,.$$
\vspace{-0.2cm}
\end{itemize}
\end{itemize}
\item Compute the lattice points of the polytope $\mathcal{B}$, i.e.~compute
$B \sei \mathcal{B} \cap \ZZ^t$.
\item Return \textit{true} if there is a point $x\in B$ such that 
$Q(x)=w$ holds. Otherwise, return \textit{false}.
\end{itemize}
\end{algo}

\begin{proof}
We first show that in the end of the above algorithm, the polyhedron~$\mathcal{B}$ is a polytope.
Note that~$s_i\in K$ is a tupel $s_i=(s_{i1},\ldots, s_{ir}, s_{ir+1},\ldots , s_{ir+\tilde{r}})$ with integers $s_{ij}\in \ZZ, \, 1\le j\le r$,
and elements~$s_{ir+k}\in\ZZ/a_k\ZZ, \, 1\le k\le \tilde{r}$.
Via an isomorphism of abelian groups $K\to K$ we may assume that
$\cone(S)$ is contained in~$\QQ^r_{\ge 0}$, 
i.e.~we have~$s_{i1},\ldots, s_{ir} \ge 0$ for all~$1\le i\le t$.
Consider the polyhedron
$$\mathcal{A} \sei (Q^0)^{-1}(w^0) \, \cap \, \QQ^t_{\ge 0} \, .$$
Note that $\mathcal{A}$ contains exactly those lattice 
points $x=(x_1,\ldots,x_t)\in \ZZ^t_{\ge 0}$ with the property that
$$
\sum_{i=1}^t x_i (s_{i1}, \ldots, s_{ir}) \ = \ \sum_{i=1}^t x_i s_i^0 \ =\  Q^0(x) \ = \ w^0 \ = \ (w_1,\ldots, w_r) 
$$
holds. This means that the integer coefficient~$x_i$ is smaller 
than~$\lfloor\frac{w_j}{s_{ij}}\rfloor$
for all~$1\le j\le r$ with $s_{ij}\neq 0$, 
where~$\lfloor\cdot\rfloor$ denotes the floor function.
In particular, we have
$$
\mathcal{A}
\ \subseteq \
\left\lbrace
x \in \QQ^t_{\ge 0}; \ \ x_i
\le 
\min\left( 
\lfloor\frac{w_j}{s_{ij}}\rfloor;\;\, 1\le j\le r, \, s_{ij}\neq 0 
\right)
\right\rbrace
$$
for all $1\le i\le t$ such that $s_i^0\neq 0_{K^0}$ holds, i.e.
$\mathcal{A}$ is bounded with respect to these coordinate directions~$i$.
For all other coordinate directions $1\le i\le t$ of $\ZZ^t$, 
i.e.~of those with $s_i^0= 0_{K^0}$,
the above algorithm computes a bound~$b_i$, where  
$$
b_i
\ \sei \ 
\prod_{k \in \mathcal{C}} a_k 
\ \in \ \ZZ \, , 
\qquad
\mathcal{C}\sei\{1\le k\le \tilde{r}; \ \, s_{i r+k}\neq 0_{\ZZ/a_k\ZZ}\} \,.
$$
Note that~$\mathcal{C}$ is non-empty since in the first step of the algorithm,
we excluded the~$s_i^\prime$ that are zero.
We conclude 
that 
$$
\mathcal{B} = \mathcal{A} \cap \{x\in\QQ^t; \; x_i \le b_i 
\text{ for all } 1\le i\le t \text{ with } s_i^0= 0_{K^0} \}
$$ 
is indeed a polytope and thus~$B=\mathcal{B}\cap \ZZ^t$ is a finite set.

We now explain why the above algorithm has the claimed output.
We need to show that $w^\prime \in S^\prime$ holds if and only if the algorithm
returns true.
Clearly, $w^\prime \in S^\prime$ holds if and only if 
$w$ is contained in~$S$.
This in turn is the case if and only if 
there is an elment
$x\in \mathcal{A} \cap \ZZ^t_{\ge 0}$ 
such that $Q(x)=w$ holds.
If $\mathcal{A}$ is a polytope, there is nothing to show.
If $\mathcal{A}$ is unbounded we showed above that there is an 
index~$1\le i\le t$ 
such that $s_i^0=0_{K^0}$ holds.
It remains to show that 
the following assertions are equivalent:
\begin{enumerate}
\item 
There is an element $x\in\mathcal{A}\cap\ZZ^t_{\ge 0}$ 
such that $Q(x)=w$ holds.
\item 
There is an 
element 
$y\in B_i 
\sei 
\mathcal{A} \cap \{x\in\ZZ^t_{\ge 0}; \; x_i \le b_i\}$ 
with~$Q(y)=w$.
\end{enumerate}
Since~$B_i \subseteq \mathcal{A}\cap\ZZ^t_{\ge 0}$ holds,
the direction ``(ii)$\Rightarrow$(i)'' is obvious.
For the other direction, recall that~$b_i$ is the product 
of all~$a_k$, $1\le k\le \tilde r$, 
with~$s_{i r+k}\neq 0_{\ZZ/a_k\ZZ}$.
Since~$s_i^0= 0 _{K^0}$ holds, 
we thus obtain $\alpha s_i = \alpha^\prime s_i$ for all 
integers $\alpha,\, \alpha^\prime$ with
$\alpha\equiv \alpha^\prime (\text{mod}~b_i)$.
This means that it is sufficient to look at coefficient vectors $x\in\ZZ^t_{\ge 0}$
with~$x_i\le b_i$, i.e.~(i) implies~(ii).
As argued above, this completes the proof.
\end{proof}

\begin{example}\label{ex:1}
Consider the abelian group~$K\sei\ZZ\oplus\ZZ/4\ZZ$, its 
elements~$s_1\sei (0,\bar{2})$, $s_2\sei (1,\bar{1})$, $s_3\sei (3,\bar{2})$,
$w\sei (3,\bar{1})$ and the monoid $S\sei\lin_{\ZZ_{\ge 0}}(s_1,s_2,s_3)$
depicted in the picture below.
Algorithm~\ref{algo:inmon} applied to $S$
and to~$w$ does the following:
\begin{itemize}
\item The map~$Q$ is defined 
by~$\ZZ^3 \to K, \, 
(x_1,x_2,x_3)\mapsto 
(x_2+3x_3, \alpha)$,
where we set $\alpha\sei((2x_1+x_2+2x_3)+4\ZZ) \in \ZZ/4\ZZ$.
Its free part~$Q^0$ is given by~$\ZZ^3 \to \ZZ, \, (x_1,x_2,x_3)\mapsto x_2+3x_3$.
\item The polyhedron $(\QQ^0)^{-1}(w^0)$ is given by~$\QQ\times \{(3-3\beta, \, \beta)\, ;\; \beta\in \QQ\}$. 
Thus the algorithm starts with the polyhedron 
$$
\mathcal{B} 
\ = \ \QQ_{\ge 0}\times \{(3-3\beta, \, \beta)\, ;\; \beta\in \QQ, \, 0\le \beta\le 1\}.
$$
\item Since~$\mathcal{B}$ is unbounded and $s_i^0$ is zero if and only $i=1$ holds,
the algorithm then computes the polytope
$$
\mathcal{B} \sei \mathcal{B} \cap \{ x\in\QQ^t; \; x_i \le 4\}.
$$ 
Now we have $\mathcal{B} = \{(\alpha,\, 3-3\beta, \,\beta)\,;\; \alpha,\beta\in\QQ, \, 0\le \alpha\le 4, \, 0\le \beta\le 1\}$.
\item In a next step, the algorithm computes the lattice points~$B$ of~$\mathcal{B}$:
$$
B \ = \ \{(\alpha, 3, 0), \, (\alpha, 0, 1); \; \alpha\in\ZZ, \, 0\le\alpha\le 4\}.
$$
\item
Since $Q((1,3,0))=1s_1+3s_2+0s_3=w$ holds, the algorithm returns \textit{true}.
\end{itemize}
\begin{figure}[ht]
\begin{tikzpicture}[scale=0.9]
    
    \coordinate (Origin)   at (0,0);
    \coordinate (XAxisMin) at (0,0);
    \coordinate (XAxisMax) at (3.5,0);
    \coordinate (YAxisMin) at (0,0);
    \coordinate (YAxisMax) at (0,1.5);
    \draw [thick, -latex] (XAxisMin) -- (XAxisMax);
    \draw [thick] (YAxisMin) -- (YAxisMax);
    \foreach \x in {0,2,3,...,6}{
        \node[draw,circle,inner sep=1pt,fill=black] at (0.5*\x,0) {};
         }
    \foreach \x in {1,3,4,...,6}{
        \node[draw,circle,inner sep=1pt,fill=black] at (0.5*\x,.5) {};
         }
    \foreach \x in {0,2,3,...,6}{
        \node[draw,circle,inner sep=1pt,fill=black] at (0.5*\x,1) {};
         }
    \foreach \x in {1,3,4,...,6}{
        \node[draw,circle,inner sep=1pt,fill=black] at (0.5*\x,1.5) {};
         }         
    \node[draw,circle,inner sep=1.4pt,fill=white] at (0,0.5) {};
    \node[draw,circle,inner sep=1.4pt,fill=white] at (0,1.5) {};    
    \node[draw,circle,inner sep=1.4pt,fill=white] at (0.5,0) {};
    \node[draw,circle,inner sep=1.4pt,fill=white] at (0.5,1) {};
    \node[draw,circle,inner sep=1.4pt,fill=white] at (1,0.5) {};
    \node[draw,circle,inner sep=1.4pt,fill=white] at (1,1.5) {};

\node[right] at (3.5,0) {\small{$\ZZ$}};  
\node[above] at (0,1.5) {\small{$\ZZ/4\ZZ$}};

\node[draw,circle,inner sep=1pt,fill=black] at (4.9,1.5) {};  
\node[right] at (4.9,1.5){\small{~element of $S$}}; 

\node[draw,circle,inner sep=1.4pt,fill=white] at (4.9,.9) {}; 
\node[right] at (4.9,.9) {\small{~element of $\tilde{S}\setminus S$}}; 

  \end{tikzpicture}
\end{figure}
\end{example}

\begin{algo}[generatorsIntMonoid] \label{algo:genintmon} 
\textit{Input:} Two subgroups~$K_1, \, K_2$ of a finitely generated 
abelian group~$K$
and generators~$s_{i1},\ldots,s_{in_i} \in K_i$
of embedded monoids $S_i \sei \lin_{\ZZ_{\ge 0}} (s_{i1},\ldots,s_{in_i}) \subseteq K_i, \, i=1,2$.\\
\textit{Output:} A set of generators for the embedded monoid
$S_1\cap S_2 \subseteq K_1\cap K_2$.
\end{algo}

\begin{itemize}
\item 
Let $\varphi\sei\varphi_1\times\varphi_2\colon
\ZZ^{n_1+n_2}\to K \times K$ be the 
homomorphism of abelian groups defined through 
 $\varphi_i\colon\ZZ^{n_i}\to K, e_{ij} \mapsto s_{ij}$, where the
$e_{ij}$ denote the canonical base vectors of $\ZZ^{n_i}$.
Furthermore, define the projection 
$\psi\colon K \times K \to (K \times K)/\Delta$, 
where $\Delta \sei \{(k,k); \; k \in K\}$ denotes the diagonal.
\item
Compute the kernel of 
$\beta\sei\psi\circ\varphi$.
\item Consider the isomorphism of abelian groups
$\iota \colon \ZZ^r \to \ker(\beta)$ and compute generators
$g_1,\ldots,g_t$ for $\ZZ^r \cap \iota^{-1}(\QQ^{n_1+n_2}_{\ge 0})$.
\item 
Define the projection 
$\pi \colon K \times K \to K, \; (x,y) \mapsto x$  on the 
first factor and return the set 
$\{ (\pi\circ\varphi\circ\iota)(g_j); \; j=1,\ldots,t\}$.
\end{itemize}

\begin{proof}
According to Gordan's lemma~\cite[Proposition 1.2.17]{CLS},
there are generators $g_1,\ldots g_t$
for the monoid $\ZZ^r \cap \iota^{-1}(\QQ^{n_1+n_2}_{\ge 0})$.
Let $M \sei \ker(\beta) \cap \ZZ^{n_1+n_2}_{\ge 0}$ and consider the diagramm
$$ 
\xymatrix{
\ZZ^r \cap \iota^{-1}(\QQ^{n_1+n_2}_{\ge 0})
\ar[r]
\ar@{}[d]|{ \rotatebox[origin=c]{270}{\small{$\subseteq$}}} 
&
M \!\!\!\!\!\!\!\!
\ar@{}[r]|{ \subseteq} 
\ar@{}[d]|{ \;\;\;\;\;\rotatebox[origin=c]{270}{\small{$\subseteq$}}} 
&
\ZZ^{n_1+n_2}_{\ge 0}
\ar[r]
\ar@{}[d]|{ \rotatebox[origin=c]{270}{\small{$\subseteq$}}} 
&
S_1 \times S_2
\ar@{}[d]|{ \rotatebox[origin=c]{270}{\small{$\subseteq$}}} 
\ar[r]
&
(K \times K) / \Delta\ar@{}[d]|{ \rotatebox[origin=c]{270}{$=$}} \\
\ZZ^r
\ar[r]^{\iota}_{\cong}
&
\ker(\beta) \!\!\!\!\!\!\!\!
\ar@{}[r]|{ \subseteq} 
&
\ZZ^{n_1+n_2}
\ar[r]^{\varphi}
\ar@/_1.5pc/[rr]_\beta
&
K \times K
\ar[r]^{\psi \; \; \; \; \; \;}
&
(K \times K) / \Delta\,.
}
$$
With the projection 
$\pi \colon K \times K \to K, \; (x,y) \mapsto x$  on the 
first factor, we obtain
$$
(\pi\circ\varphi\circ\iota)\left(   
\ZZ^r \cap \iota^{-1}(\QQ^{n_1+n_2}_{\ge 0}  )   \right)
\ = \
(\pi\circ\varphi)( M  )
\ = \
S_1 \cap S_2\, ,
$$
where the last equality is true since
$\varphi(M)=\{(a,b) \in S_1\times S_2; \; a=b \}$ holds.
We conclude that $\{ (\pi\circ\varphi\circ\iota)(g_j); \; j=1,\ldots,t\}$
is a set of generators for $S_1\cap S_2$. 
\end{proof}

\begin{example}
Consider the abelian group~$K_1\sei K_2\sei K\sei\ZZ$ as well as its 
elements~$s_{11}\sei 2$, $s_{12}\sei 5$, and~$s_{21}\sei 3$.
Algorithm~\ref{algo:genintmon} applied to the monoids
$S_1\sei\lin_{\ZZ_{\ge 0}}(s_{11},s_{12})$ and~$S_2\sei\lin_{\ZZ_\ge 0}(s_{21})$
depicted in the figure below 
proceeds as follows:
\begin{itemize}
\item The map~$\varphi$ is given by~$\ZZ^3 \to \ZZ\times\ZZ, \, e_{11}\mapsto s_{11}, \, e_{12}\mapsto s_{12},\, e_{21}\mapsto s_{21}$,
where $3=2+1=n_1+n_2$ holds. 
To be precise,~$\varphi$ is defined by the matrix
$$
\left(
\begin{array}{ccc}
2 & 5 & 0\\
0 & 0 & 3
\end{array}
\right) \, .
$$
\item
The kernel of~$\beta$
is given by
$
\ker(\beta) 
 = 
\lin_{ \ZZ} 
\big( (1,2,4),(0,3,5) \big)
  \cong 
 \ZZ^2  .
$
\item 
The isomorphism $\iota\colon\ZZ^2 \to \ker(\beta)$ is defined
by mapping the first canonical base vector of~$\ZZ^2$ to~$(1,2,4)$
and the second one to~$(0,3,5)$.
\item 
We have 
$\QQ^2 \cap \iota^{-1}(\QQ^{3}_{\ge 0}) = \cone((3,-2),(0,1))$.
According to Gordan's Lemma, 
computing the lattice points of the polytope
$$\conv((0,0),(3,-2),(0,1),(3,-1))$$ gives the following generators
for the monoid $\ZZ^2 \cap \iota^{-1}(\QQ^{3}_{\ge 0})$:
$$
(0,0), \, (0,1), \, (1,0), \, (2,-1),\, (3,-2),\, (3,-1)\, .
$$
\item 
Applying $\pi\circ\varphi\circ\iota$ to those generators gives
the generators~$0, 15, 12, 9, 6, 21$ for $S_1\cap S_2$.
Note that this list is not a hilbert basis. 
To speed up the computation process in~\cite{MonoidPackage}, 
some reduction mechanisms were implemented.
\end{itemize}
\begin{center}
\begin{figure}[ht]
\begin{tikzpicture}[scale=0.6]

    \foreach \x in {0,5,10}{
        \node[] at (0.5*\x-.05,1) {\tiny{ \x}};
        \draw[thin, dotted]  (0.5*\x, .7) -- (0.5*\x,-2.5);         
     }    
    \coordinate (Origin)   at (0,0);
    \coordinate (XAxisMin) at (-0.1,0);
    \coordinate (XAxisMax) at (6.3,0);
    \draw [thick] (XAxisMin) -- (XAxisMax);
    \foreach \x in {0,1,...,12}{
        \node[draw,circle,inner sep=1.4pt,fill=white] at (0.5*\x,0) {};
         }
    \foreach \x in {0,2,4,5,...,12}{
        \node[draw,circle,inner sep=1.1pt,fill=black] at (0.5*\x,0) {};
         }
\node[right] at (6.5,0) {\small{$S_1\subseteq \ZZ$}}; 
    \coordinate (Origin)   at (0,-1);
    \coordinate (XAxisMin) at (-0.1,-1);
    \coordinate (XAxisMax) at (6.3,-1);
    \draw [thick] (XAxisMin) -- (XAxisMax);
    \foreach \x in {0,1,2,...,12}{
        \node[draw,circle,inner sep=1.4pt,fill=white] at (0.5*\x,-1) {};
         }
    \foreach \x in {0,3,6,9,12}{
        \node[draw,circle,inner sep=1.1pt,fill=black] at (0.5*\x,-1) {};
         }
\node[right] at (6.5,-1) {\small{$S_2\subseteq \ZZ$}};

    \coordinate (Origin)   at (0,-2);
    \coordinate (XAxisMin) at (-0.1,-2);
    \coordinate (XAxisMax) at (6.3,-2);
    \draw [thick] (XAxisMin) -- (XAxisMax);
       
    \foreach \x in {0,1,2,...,12}{
        \node[draw,circle,inner sep=1.4pt,fill=white] at (0.5*\x,-2) {};
         }
    \foreach \x in {0,6,9,12}{
        \node[draw,circle,inner sep=1.1pt,fill=black] at (.5*\x,-2) {};
         }

\node[right] at (6.5,-2) {\small{$S_{1}\cap S_2\subseteq \ZZ$}};

\node[draw,circle,inner sep=1pt,fill=black] at (11.5,-.3) {};  
\node[right] at (11.7,-.3) {\small {element of the monoid}}; 

\node[draw,circle,inner sep=1.4pt,fill=white] at (11.5,-1.2){};  
\node[right] at (11.7,-1.2) {\small{element of $\ZZ$ but not }}; 
\node[right] at (11.7,-1.85) {\small{element of the monoid}};  

  \end{tikzpicture}
\end{figure}
\end{center}
\end{example}

\begin{algo}[inCondIdeal] \label{algo:incondid}
\textit{Input:} A finitely generated abelian group~$K$,
generators~$s_1,\ldots,s_t\in K$
of an embedded monoid 
$S\sei\lin_{\ZZ_{\ge 0}}(s_1,\ldots,s_{t})\subseteq K$ and 
an element $w\in K$.\\
\textit{Output:} \textit{True} if $w$ is contained in $c(\tilde{S}/S)$. Otherwise, \textit{false} is returned.

\begin{itemize}
\item Compute $M$ as defined in Lemma~\ref{lem:tildeSfgmodule}.
\item Use Algorithm~\ref{algo:inmon} to test whether $S$ contains
$w+M$. Return \textit{true} if this is the case; otherwise 
return \textit{false}.
\end{itemize}

\end{algo}

\begin{proof}
Let~$w\in K$ and consider~$M$ as defined in 
Lemma~\ref{lem:tildeSfgmodule}.
According to this lemma, $M$ generates $\tilde{S}$ as an $S$-module.
This means that the conductor ideal~$c(\tilde{S}/S)$ contains $w$
if and only if $w+M$ is contained in $S$.
\end{proof}

\begin{figure}[ht]
\begin{tikzpicture}[scale=0.9]
    
 \draw[fill, black!30!white] (1.4,-.1)--(1.4,1.6)--(3.2, 1.6)--(3.2,-.1);    
    
    \coordinate (Origin)   at (0,0);
    \coordinate (XAxisMin) at (0,0);
    \coordinate (XAxisMax) at (3.5,0);
    \coordinate (YAxisMin) at (0,0);
    \coordinate (YAxisMax) at (0,1.5);
    \draw [thick, -latex] (XAxisMin) -- (XAxisMax);
    \draw [thick] (YAxisMin) -- (YAxisMax);
    \foreach \x in {0,2,3,...,6}{
        \node[draw,circle,inner sep=1pt,fill=black] at (0.5*\x,0) {};
         }
    \foreach \x in {1,3,4,...,6}{
        \node[draw,circle,inner sep=1pt,fill=black] at (0.5*\x,.5) {};
         }
    \foreach \x in {0,2,3,...,6}{
        \node[draw,circle,inner sep=1pt,fill=black] at (0.5*\x,1) {};
         }
    \foreach \x in {1,3,4,...,6}{
        \node[draw,circle,inner sep=1pt,fill=black] at (0.5*\x,1.5) {};
         }         
    \node[draw,circle,inner sep=1.4pt,fill=white] at (0,0.5) {};
    \node[draw,circle,inner sep=1.4pt,fill=white] at (0,1.5) {};    
    \node[draw,circle,inner sep=1.4pt,fill=white] at (0.5,0) {};
    \node[draw,circle,inner sep=1.4pt,fill=white] at (0.5,1) {};
    \node[draw,circle,inner sep=1.4pt,fill=white] at (1,0.5) {};
    \node[draw,circle,inner sep=1.4pt,fill=white] at (1,1.5) {};

\node[right] at (3.5,0) {\small{$\ZZ$}};  
\node[above] at (0,1.5) {\small{$\ZZ/4\ZZ$}};

\node[draw,circle,inner sep=1pt,fill=black] at (4.9,1.5) {};  
\node[right] at (4.9,1.5){\small{~element of $S$}}; 

\node[draw,circle,inner sep=1.4pt,fill=white] at (4.9,.9) {}; 
\node[right] at (4.9,.9) {\small{~element of $\tilde{S}\setminus S$}}; 

\node[draw, rectangle, draw= gray!60!, fill=gray!60!] at (4.9,.3) {}; 
\node[right] at (4.9,.3) {\small{~element of $c(\tilde{S}/S)$}};  
\node[draw,circle,inner sep=1pt,fill=black] at (4.9,.3) {};        

  \end{tikzpicture}
\end{figure}

\begin{example}\label{ex:2}
Consider the abelian group~$K\sei\ZZ\oplus\ZZ/4\ZZ$ as well as its 
elements~$s_1\sei (0,\bar{2})$, $s_2\sei (1,\bar{1})$, $s_3\sei (3,\bar{2})$
and the monoid $S\sei\lin_{\ZZ_{\ge 0}}(s_1,s_2,s_3)$ as in Example~\ref{ex:1}.
The monoid and its conductor ideal are illustrated in the above picture.
We apply algorithm~\ref{algo:incondid} to $w\sei (3,\bar{1})$ and test whether
$w$ is contained in $c(\tilde{S}/S)$.
\begin{itemize}
\item The maps~$Q$ and~$Q^0$ are as in Example~\ref{ex:1}.
\item The algorithm computes $M$ as defined in Lemma~\ref{lem:tildeSfgmodule}. 
We obtain 
$$
M \ = \ \{(0,a), \, (1,a); \; a\in \ZZ/4\ZZ\} \ \subseteq \ K \,.
$$
\item 
In the next step the algorithm uses Algorithm~\ref{algo:inmon} to test 
whether $S$ contains $w+M=\{(3,a), \, (4,a); \; a\in \ZZ/4\ZZ\}$. 
\item Similarily as in Example~\ref{ex:1}, for $x\in w+M$ with $x^0=3$,
Algorithm~\ref{algo:inmon} computes
$
B_3 \sei \{ (\alpha, 3,0),\, (\alpha, 0,1); \; 
0\le \alpha\le 4, \, \alpha \in \ZZ\}\, 
$
and we obtain
$
B_4 \sei
\{ (\alpha, 4,0),\, (\alpha, 1,1); \; 
0\le \alpha\le 4, \, \alpha \in \ZZ\}\, 
$
for all $x\in w+M$ with $x^0=4$.
Since for all $x\in w+M$ with $x^0=i$, $i=3,4$, there is some $y\in B_i$
with $Q(y_i)=x_i$, the algorithm returns \textit{true}.
\end{itemize}
\end{example}

\begin{algo}[pointCondIdeal] \label{algo:pointcondid}
\textit{Input:} A finitely generated abelian group~$K$,
an element $w\in K$ and
generators~$s_1,\ldots,s_t\in K$
of a spanning embedded monoid 
$S\sei\lin_{\ZZ_{\ge 0}}(s_1,\ldots,s_{t})\subseteq K$.\\
\textit{Output:} A point of the conductor ideal $c(\tilde{S}/S)$.

\begin{itemize}
\item Compute $w\in K$ that defines a point in the relative interior 
of $\cone(S)$.
\item Use Algorithm~\ref{algo:incondid} to compute 
the smallest integer $r \in \ZZ_{\ge 1}$ such that $rw$ is contained in
$c(\tilde{S}/S)$. Return $rw$.
\end{itemize}
\end{algo}

\begin{proof}
This Algorithm terminates since $S\subseteq K$ is spanning.
\end{proof}

\begin{example}
Consider the abelian group~$K\sei\ZZ\oplus\ZZ/4\ZZ$ as well as its 
elements~$s_1\sei (0,\bar{2})$, $s_2\sei (1,\bar{1})$, $s_3\sei (3,\bar{2})$
and the monoid $S\sei\lin_{\ZZ_{\ge 0}}(s_1,s_2,s_3)$ as in Examples~\ref{ex:1} and~\ref{ex:2}.
We apply algorithm~\ref{algo:pointcondid} to compute an element of~$c(\tilde{S}/S)$.
\begin{itemize}
\item At first the algorithm computes the element $(1,\bar{0})\in K$ defining 
an element in the relative interior of $\cone(S)$.
\item 
For $j=1,2$, Algorithm~\ref{algo:incondid} returns that
$j\, (1,\bar{0})$ is not contained in~$c(\tilde{S}/S)$.
\item In the next step, Algorithm~\ref{algo:incondid} shows that
$(3,\bar{0})$ is an element of~$c(\tilde{S}/S)$.
\end{itemize}
\end{example}

\section{The base point free monoid of 
Mori dream spaces}

We turn to Mori dream spaces and recall the necessary 
background from~\cite{ADHL}. In addition, we 
present a description of the monoid
of base point free Cartier divisor classes of a Mori dream space
in terms of combinatorial data,
which will be crucial in Algorithm~\ref{algo:fujitabpf}.
We also present algorithms for computing generators of
the base point free monoid and for testing whether
a Weil divisor class is base point free or~not.

\begin{definition}
Let $D$ be a Weil divisor on an irreducible,
normal variety $X$ and consider 
a non-zero section 
$f \in \Gamma(X, \mathcal{O}_{X}(D))$.
We call the effective divisor
$$
{\rm div}_D(f)
\ \sei \
{\rm div}(f) + D 
\ \in \
\WDiv(X)
$$
the \emph{$D$-divisor} of $f$.
The \emph{base locus} and the 
\emph{stable base locus} 
of the class 
$w \sei [D]  \in \Cl(X)$ 
 are defined as
$$
\Bs(w) 
\  \sei \
\bigcap_{f \in \Gamma(X, \mathcal{O}_{X}(D))} \Supp({\rm div}_D(f)),
\ \ \ \ \ \
\B(w) 
\  \sei \
\bigcap_{n \in \ZZ_{\ge 0}} \Bs|nD|\,.
$$
An element $x \in  \Bs(w)$ is 
called a $\emph{base point}$
of $w$. We call $D \in \WDiv(X)$ or its 
class~$w \in \Cl(X)$ \emph{base point free}
if the base locus $\Bs(w)$ is empty
and \emph{semiample} if its stable 
base locus is empty.
The embedded monoid $\BPF(X)\subseteq\Pic(X)$
of base point free Cartier divisor classes is called 
\emph{base point free monoid of $X$}.
By $\SAmple(X)\subseteq\Cl(X)_{\QQ}$ 
and $\Ample(X)\subseteq\Cl(X)_{\QQ}$, we denote 
the cones of semiample and ample
Weil divisor classes, respectively. 
\end{definition}

Recall that a \emph{Mori dream space} is an irreducible normal 
projective variety~$X$ over an algebraic closed field of characteristic
zero with finitely generated divisor class 
group and finitely generated Cox ring
$$
{\rm Cox}(X)
\ \sei \
\bigoplus_{[D] \in \Cl(X)} 
\Gamma( X, \mathcal{O}_X(D))\, ,
$$
where in case of torsion in the divisor class group 
some care is required in this definition, see \cite[Section 1.4]{ADHL}.
Recall that a non-zero non-unit $f\in{\rm Cox}(X)$ is called~${\rm Cl}(X)$-prime 
if it is homogeneous and if $f|gh$ with homogeneous elements~$g,h\in{\rm Cox}(X)$ 
implies~$f|h$ or $f|g$.
Let $\mathfrak{F}\sei(f_1,\ldots,f_r)$ be a system of pairwise
non-associated ${\rm Cl}(X)$-prime generators of ${\rm Cox}(X)$.
Consider the homomorphism of abelian groups
$Q\colon \ZZ^r\to{\rm Cl}(X), \; e_i \mapsto\deg(f_i)$ as well as 
the total coordinate space $\overline{X}\sei\Spec({\rm Cox}(X))$.
A face $\gamma_0 \preceq \gamma$ of the positive
orthant~$\gamma \subseteq \QQ^r$ is called an~\emph{$\mathfrak{F}$-face},
if there is a point $x\in\overline{X}$ with 
$x_i \neq 0 \Leftrightarrow e_i \in\gamma_0$ for all 
$1\le i\le r$.
The collection of \emph{relevant faces} and the 
\emph{covering collection} are
$$
\rlv(X) 
\ \sei \ 
\{
\gamma_0\preceq \gamma; \ \gamma_0 \; \mathfrak{F}
\text{-face with } \Ample(X)^\circ\subseteq Q(\gamma_0)^\circ
\}\, ,
$$
$$
\cov(X) 
\ \sei \ 
\{
\gamma_0\in\rlv(X); \ \gamma_0 \text{ minimal with respect to inclusion}
\}\, .
$$
Note that an ample Weil divisor class~$u\in{\rm Cl}(X)$ 
together with $\mathfrak{F}$ and relations 
of~${\rm Cox}(X)$
fixes a Mori dream space up to isomorphism: 
one can reconstruct~$X$ as GIT-quotient 
$p_X\colon \overline{X}^{\rm ss}(u)\to X$ of the set of 
$H$-semistable points $\overline{X}^{\rm ss}(u) \subseteq \overline{X}$
regarding the action of $H\sei \Spec\KK(\Cl(X))$
on~$\overline{X}$, see for instance~\cite[Section~3.1]{ADHL}.
 
To any relevant face $\gamma_0\preceq \gamma$
we associate as in~\cite[Construction 3.3.1.1]{ADHL} the set $X(\gamma_0) \sei p_X(\overline{X}(\gamma_0))$, where we have
$$
\overline{X}(\gamma_0)
\ \sei \ 
\{ x \in \overline{X}; \ \ f_i(x)\neq 0 \Leftrightarrow e_i \in\gamma_0 \text{ for } 1\le i\le r\}
\ \subseteq \
\overline{X}^{\rm ss}(u)\, .
$$
According to~\cite[Corollary 3.3.1.6.]{ADHL} 
and to~\cite[Proposition 3.3.2.8]{ADHL},
the Picard group~$\Pic(X)$ of~$X$ and the base locus
of an element $w\in\Cl(X)$ are given by
$$ 
\Pic(X) 
\ = \ 
\bigcap_{\gamma_0 \in \cov(X)} 
Q(\lin(\gamma_0) \cap E)
\quad \text{ and } \quad
\Bs(w) 
\  = \
\bigcup_{\gamma_0 \in{\rm rlv}(X) 
\atop w\notin Q(\gamma_0 \cap \ZZ^r)} 
X(\gamma_0)\,.
$$

For projective varieties, any Cartier divisor is the difference
of two very ample divisors~\cite[1.20]{debarre}.
Thus, the base point free monoid of projective varieties is
a spanning embedded monoid. By Proposition~\ref{prop:condid}, this 
means in particular that 
its conductor ideal is non-empty.
For Mori dream spaces, 
we retrieve the same result in the
following Corollary. In addition, we give
a description of~$\BPF(X)$ in terms of the covering collection
and the homomorphism 
$Q\colon \ZZ^r\to{\rm Cl}(X), \; e_i \mapsto\deg(f_i)$.

\begin{corollary}\label{cor:bpfspanning}
In the above notation, the  base point free monoid $\BPF(X) \subseteq \Pic(X)$
of a Mori dream space~$X$
is a spanning embedded monoid
given by 
$$ 
\BPF(X) 
\ = \ 
\bigcap_{\gamma_0 \in \cov(X)} Q(\gamma_0 \cap \ZZ^r)\,.
$$
\end{corollary}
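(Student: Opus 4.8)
The plan is to derive the formula directly from the base locus description $\Bs(w) = \bigcup_{\gamma_0 \in \rlv(X),\ w \notin Q(\gamma_0 \cap \ZZ^r)} X(\gamma_0)$ recalled above. An element $w \in \Cl(X)$ is base point free exactly when $\Bs(w) = \emptyset$, and base point freeness forces $w$ to be Cartier: at any point some $D$-divisor avoids it, so $\OO_X(D)$ is locally trivial there. Hence $\BPF(X) = \{w \in \Cl(X); \ \Bs(w) = \emptyset\}$, and I never have to test membership in $\Pic(X)$ separately.

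First I would prove the auxiliary identity $\BPF(X) = \bigcap_{\gamma_0 \in \rlv(X)} Q(\gamma_0 \cap \ZZ^r)$. If $w$ lies in every $Q(\gamma_0 \cap \ZZ^r)$, $\gamma_0 \in \rlv(X)$, then the index set of the union defining $\Bs(w)$ is empty, so $\Bs(w) = \emptyset$ and $w \in \BPF(X)$. Conversely, if $w \in \BPF(X)$ but $w \notin Q(\gamma_0 \cap \ZZ^r)$ for some relevant $\gamma_0$, then $X(\gamma_0) \subseteq \Bs(w) = \emptyset$; but a relevant face is in particular an $\mathfrak{F}$-face, so $\overline{X}(\gamma_0)$ is a non-empty subset of $\overline{X}^{\rm ss}(u)$ and thus $X(\gamma_0) = p_X(\overline{X}(\gamma_0)) \neq \emptyset$, a contradiction. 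This non-emptiness of the pieces $X(\gamma_0)$ is the crucial input, and I regard it as the main obstacle, since everything else is formal.

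Next I would replace $\rlv(X)$ by the covering collection $\cov(X)$. The monotonicity $\gamma_1 \preceq \gamma_0 \Rightarrow \gamma_1 \cap \ZZ^r \subseteq \gamma_0 \cap \ZZ^r \Rightarrow Q(\gamma_1 \cap \ZZ^r) \subseteq Q(\gamma_0 \cap \ZZ^r)$ is immediate. Since $\rlv(X)$ is finite and ordered by inclusion, every relevant face $\gamma_0$ contains a minimal relevant face $\gamma_1 \in \cov(X)$; hence any $w$ lying in all $Q(\gamma_1 \cap \ZZ^r)$, $\gamma_1 \in \cov(X)$, already lies in $Q(\gamma_0 \cap \ZZ^r)$ for every $\gamma_0 \in \rlv(X)$. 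Together with the reverse inclusion coming from $\cov(X) \subseteq \rlv(X)$, this gives $\bigcap_{\gamma_0 \in \cov(X)} Q(\gamma_0 \cap \ZZ^r) = \bigcap_{\gamma_0 \in \rlv(X)} Q(\gamma_0 \cap \ZZ^r)$, which combined with the previous step yields the asserted equality.

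Finally I would read off the structural assertions. Each $\gamma_0 \cap \ZZ^r$ is finitely generated by Gordan's lemma, so its image $Q(\gamma_0 \cap \ZZ^r)$ is a finitely generated submonoid of $\Cl(X)$; since a finite intersection of embedded monoids is again embedded by Lemma~\ref{lem:intersection}~(i) and an evident induction, $\BPF(X)$ is an embedded monoid, and its containment in $\Pic(X)$ was already secured above. Spanning is precisely the very-ample-difference argument recalled just before the statement: every Cartier class is a difference of two very ample, hence base point free, classes, so $\BPF(X)$ generates $\Pic(X)$ as a group.
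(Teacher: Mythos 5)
Your proof is correct, and on the displayed formula it agrees in substance with the paper: the paper dismisses that part as an ``immediate consequence'' of the base locus description, and you supply exactly the details it suppresses, namely that $X(\gamma_0)=p_X(\overline{X}(\gamma_0))\neq\emptyset$ for every relevant face (since relevant faces are $\mathfrak{F}$-faces), which makes the converse direction work, and the monotonicity $\gamma_1\preceq\gamma_0\Rightarrow Q(\gamma_1\cap\ZZ^r)\subseteq Q(\gamma_0\cap\ZZ^r)$, which justifies passing from $\rlv(X)$ to the minimal faces in $\cov(X)$; your observation that a base point free Weil class is automatically Cartier is also sound and replaces the paper's implicit use of $Q(\gamma_0\cap\ZZ^r)\subseteq Q(\lin(\gamma_0)\cap\ZZ^r)$ together with the displayed description of $\Pic(X)$ to get the containment in $\Pic(X)$. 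The genuine divergence is in the spanning claim. You invoke the classical fact that on a projective variety every Cartier class is the difference of two very ample, hence base point free, classes -- legitimate, since the paper itself recalls this from Debarre immediately before the corollary. The paper instead proves spanning combinatorially: each $Q(\gamma_0\cap\ZZ^r)\subseteq Q(\lin(\gamma_0)\cap\ZZ^r)$ is a spanning embedded monoid, the interiors of the cones $Q(\gamma_0)$ all contain the non-empty set $\Ample(X)^\circ$, and Proposition~\ref{prop:intersection} then shows that the intersection is spanning inside $\bigcap_{\gamma_0\in\cov(X)}Q(\lin(\gamma_0)\cap\ZZ^r)=\Pic(X)$. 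Your route is shorter and avoids the conductor-ideal machinery hidden in Proposition~\ref{prop:intersection}; the paper's route is the announced point of the corollary (``we retrieve the same result''), i.e.~a derivation self-contained within the Cox ring formalism whose key hypothesis -- overlapping cone interiors -- is certified by the common ample cone, and it exercises precisely the intersection technique on which Algorithms~\ref{algo:genintmon} and~\ref{algo:genBPF} later depend.
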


\begin{proof}
The representation of $\BPF(X)$ as an intersection of
monoids $Q(\gamma_0\cap E)$ is an immediate consequence
of the above description of base loci.
In addition, for each $\gamma_0 \in \cov(X)$, 
the embedded 
monoid~$Q(\gamma_0 \cap E) \subseteq Q(\lin(\gamma_0) \cap E)$ 
is spanning.
Using the above description of $\Pic(X)$ 
together with Proposition~\ref{prop:intersection}, 
we obtain that $\BPF(Z) \subseteq \Pic(Z)$ 
is a spanning embedded monoid.
\end{proof}

Note that the following algorithms build on the maple-based
software package {\tt{MDSpackage}}~\cite{mdspackage}.
A Mori dream space~$X$ is entered and stored in terms of 
an ample class~$u$ together with pairwise 
non-associated~$\Cl(X)$-prime generators and relations 
of~${\rm Cox}(X)$.
As explained above, this data fixes a Mori dream space up to isomorphism.

\begin{algo}[generatorsBPF] \label{algo:genBPF} 
\textit{Input:} A Mori dream space.\\
\textit{Output:} A set of generators for the embedded monoid
$\BPF(X)\subseteq\Pic(X)$.

\begin{itemize}
\item 
Use {\tt{MDSpackage}} to compute the covering collection of~$X$.
\item 
Use Algorithm~\ref{algo:genintmon} to compute generators 
of the intersection
$$\bigcap_{\gamma_0\in\cov(X)} Q(\gamma_0 \cap \ZZ^r)\,.$$
\end{itemize}
\end{algo}

\begin{algo}[isBasePointFree] \label{algo:isbasepointfree}
\textit{Input:} A Mori dream space~$X$ and a Weil divisor class $w\in\Cl(X)$.\\
\textit{Output:} \textit{True} if $w$ is base point free. 
Otherwise, \textit{false} is returned.

\begin{itemize}
\item 
Use Algorithm~\ref{algo:genBPF} to compute generators
of $\BPF(X)\subseteq\Pic(X)$.
\item 
Apply Algorithm~\ref{algo:inmon} to $w$ and $\BPF(X)$.
\end{itemize}
\end{algo}

Using the implementation given in~\cite{MonoidPackage}, we study the 
question of the existence of semiample Cartier divisor classes 
that are not base point free. It is well-known that for Cartier divisors 
on complete 
toric varieties, semiampleness implies base point freeness,
see for instance~\cite[Theorem 6.3.12.]{CLS}.
For smooth rational projective varieties with a torus action 
of complexity one and Picard number two, the same statement
follows immediately from the classification done in~\cite{fahani}.
Note that the discrepancy between semiampleness and base point 
freeness of divisors on varieties with a torus action of complexity
one is already fairly well understood in the language of polyhedral divisors: 
A criterion for semiampleness is given in~\cite[Theorem 3.27]{PeSu}
and a criterion for base point freeness was proved in~\cite[Theorem 3.2]{IlVo}.

\begin{example}\label{ex:smoothsemiamplenotbpf}
We give an example of a smooth Mori dream $\KK^*$-surface that admits semiample Cartier 
divisor classes with base points.
\begingroup
\footnotesize
\setlength{\arraycolsep}{4pt}
\begin{enumerate}[leftmargin=3em]
\item[\tt >] \tt{Q $\sei$ matrix([[1,-1,-1,0,0,0,0,0,0,0,0,0,0,0,0],[0,1,-1,1,0,0,0,0,0,0,0,0,0,0,0],
[0,1,0,-1,1,0,0,0,0,0,0,0,0,0,0],[0,1,0,0,-1,1,0,0,0,0,0,0,0,0,0],[0,0,0,0,0,0,  \\ -1,1,1,0,0,0,0,0,0],[0,-1,0,0,0,1,0,-1,1,0,0,0,0,0,0],[0,0,0,1,0,0,1,0,1,1,0,0, \\ 0,0,0],[0,1,0,0,0,0,0,0,1,0,1,0,0,0,0],[1,0,0,-1,0,0,1,0,0,0,0,1,0,0,0],[0,1,0,\\ 0,0,0,0,1,0,0,0,0,1,0,0],[0,1,0,0,0,-1,0,0,0,0,0,0,0,1,0],[0,-1,0,0,0,1,0,0,0,0, 0,0,0,0,1]]);}
\out{Q\ \sei \
\left[ 
\begin {array}{ccccccccccccccc}
 1&-1&-1&0&0&0&0&0&0&0&0&0&0&0&0
\\ \noalign{}0&1&-1&1&0&0&0&0&0&0&0&0&0&0&0
\\ \noalign{}0&1&0&-1&1&0&0&0&0&0&0&0&0&0&0
\\ \noalign{}0&1&0&0&-1&1&0&0&0&0&0&0&0&0&0
\\ \noalign{}0&0&0&0&0&0&-1&1&1&0&0&0&0&0&0
\\ \noalign{}0&-1&0&0&0&1&0&-1&1&0&0&0&0&0&0
\\ \noalign{}0&0&0&1&0&0&1&0&1&1&0&0&0&0&0
\\ \noalign{}0&1&0&0&0&0&0&0&1&0&1&0&0&0&0
\\ \noalign{}1&0&0&-1&0&0&1&0&0&0&0&1&0&0&0
\\ \noalign{}0&1&0&0&0&0&0&1&0&0&0&0&1&0&0
\\ \noalign{}0&1&0&0&0&-1&0&0&0&0&0&0&0&1&0
\\ \noalign{}0&-1&0&0&0&1&0&0&0&0&0&0&0&0&1
\end {array}
 \right]
}

\item[\tt >] \tt{RL := [T[1]\textasciicircum 5*T[2]*T[3]\textasciicircum 4*T[4]\textasciicircum 3*T[5]\textasciicircum 2*T[6]
+T[7]\textasciicircum 2*T[8]*T[9]\\
+T[10]\textasciicircum 3*T[11]*T[12]\textasciicircum 2*T[13]];}
\out{RL := \left[
T_1^5T_2T_3^4T_4^3T_5^2T_6
+T_7^2T_8T_9
+T_{10}^3T_{11}T_{12}^2T_{13}\right]}
\item[\tt >] \tt{R $\sei$ createGR(RL, 
vars(15),
[Q]);}
\out{R\sei GR(15,1,[12,[]])}
\item[\tt >] \tt{X $\sei$ createMDS(R,relint(MDSmov(R)));}
\out{X \sei MDS(15,1,2,[12,[]])}
\item[\tt >] \tt{MDSissmooth(X);}
\out{true}
\item[\tt >] \tt{w $\sei$ [-1,1,1,1,3,2,3,4,0,3,1,5];}
\out{w := [-1, 1, 1, 1, 3, 2, 3, 4, 0, 3, 1, 5]}
\item[\tt >] \tt{contains(MDSsample(X),w);}
\out{true}
\item[\tt >] \tt{isBasePointFree(X,w);}
\out{false}
\end{enumerate}
\endgroup
\noindent
The computation shows that 
$w=[-1,1,1,1,3,2,3,4,0,3,1,5]$
is a semiample but not base point free Cartier divisor class.

For a geometric interpretation note that~$X$
is obtained by blowing up~$\PP_1\times\PP_1$ ten times in the
following way:
One considers the~$\KK^*$-action on~$\PP_1\times\PP_1$
given by
$$
t \cdot ([y_0,y_1],[z_0,z_1]) 
\ \sei \ ([y_0,y_1],[z_0,tz_1]) \, .
$$
The fixed points lie on the two curves
$C_1\sei \PP_1\times\{[0,1]\}$
and~$C_1\sei \PP_1\times\{[1,0]\}$.
In order two obtain~$X$, one blows up
the three fixed points
$c_{11} \sei \{[0,1],[0,1]\} \in C_1$, 
$c_{12} \sei \{[1,0],[0,1]\} \in C_1$ and 
$c_{21} \sei \{[1,-1],[1,0]\}  \in C_2$. 
The resulting hyperbolic fixed points
are again blown up:
for~$c_{11}$, one repeats this 
four times, 
for~$c_{12}$, one repeats this 
two times
and for~$c_{21}$ just once.
The resulting variety then is isomorphic to~$X$.
\end{example}

\section{Fujita base point free test}

In the end of the eighties, Takao Fujita conjectured the following:

\begin{conjecture}[Fujita's base point free conjecture~\cite{fuconj}] \label{conj:bpf}
Let $X$ be an n-dimensional smooth projective variety 
with canonical class $\mathcal{K}_X$ and let 
$\mathcal{L}$ be an ample Cartier divisor 
class. Then the following holds:
$$
\mathcal{K}_X + m  \mathcal{L} \text{ is base point free 
for all } m \geq n+1.
$$
\end{conjecture}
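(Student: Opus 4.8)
The plan is to prove base point freeness of $\mathcal{K}_X + m\mathcal{L}$ one point at a time: fixing an arbitrary $x \in X$, I would produce a global section of $\mathcal{O}_X(\mathcal{K}_X + m\mathcal{L})$ that does not vanish at $x$. The standard machine for this is the theory of multiplier ideals together with Nadel vanishing. Concretely, suppose one can exhibit an effective $\QQ$-divisor $D \sim_{\QQ} \lambda\,\mathcal{L}$ with $\lambda < m$ whose multiplier ideal $\mathcal{J}(X,D)$ agrees, in a neighbourhood of $x$, with the maximal ideal $\mathfrak{m}_x$, and whose co-support has $x$ as an isolated point. Then $\mathcal{K}_X + m\mathcal{L} - D \sim_{\QQ} \mathcal{K}_X + (m-\lambda)\mathcal{L}$ differs from $\mathcal{K}_X$ by the ample class $(m-\lambda)\mathcal{L}$, so Nadel vanishing gives $H^1(X, \mathcal{O}_X(\mathcal{K}_X + m\mathcal{L}) \otimes \mathcal{J}(X,D)) = 0$. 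The restriction map $H^0(\mathcal{K}_X+m\mathcal{L}) \to H^0\big((\mathcal{K}_X+m\mathcal{L}) \otimes \mathcal{O}_X/\mathcal{J}(X,D)\big)$ is therefore surjective, and since $\mathcal{O}_X/\mathcal{J}(X,D)$ is $\mathbb{C}_x$ near $x$, this yields a section nonzero at $x$. As $x$ is arbitrary, the base locus is empty.

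The entire difficulty is thus concentrated in constructing the singular divisor $D$ with a log canonical centre isolated at $x$ while keeping $\lambda$ below $n+1$. I would build $D$ by the inductive cutting-down-the-centre method, following the multiplier-ideal arguments used in the low-dimensional cases \cite{EL1, Kaw}. The starting input is a dimension count: since $\mathcal{L}$ is ample, $h^0(X, \mathcal{O}_X(k\mathcal{L}))$ grows like $(\mathcal{L}^n/n!)\,k^n$, whereas vanishing to order $t$ at $x$ imposes roughly $t^n/n!$ linear conditions, so for suitable $k$ there is a divisor in $|k\mathcal{L}|$ of large multiplicity at $x$; rescaling produces an effective $\QQ$-divisor numerically proportional to $\mathcal{L}$ and singular at $x$. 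Taking the first non-klt centre $Z_1 \ni x$ it cuts out, I would then transfer the positivity of $\mathcal{L}$ to $Z_1$ (via restricted volumes and Kawamata subadjunction), repeat the construction there, and iterate, obtaining a flag $X = Z_0 \supsetneq Z_1 \supsetneq \cdots \supsetneq Z_k = \{x\}$ and a divisor $D$ creating all these centres. A tie-breaking perturbation then isolates $x$ as the unique minimal log canonical centre and forces $\mathcal{J}(X,D)$ to coincide with $\mathfrak{m}_x$ near $x$, which is exactly the input demanded by the first step.

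The main obstacle is the bookkeeping of positivity across the induction. Each descent from $Z_i$ to $Z_{i+1}$ consumes an amount of $\mathcal{L}$ governed by the restricted quantity $(\mathcal{L}^{\dim Z_i} \cdot Z_i)^{1/\dim Z_i}$, and the sum of these contributions is what bounds $\lambda$. The naive estimate yields only $\lambda \le n(n+1)/2$, so the argument as stated establishes base point freeness merely for $m \ge n(n+1)/2 + 1$, well short of the conjectural threshold. Reaching the sharp bound $m \ge n+1$ demands extracting the full positivity at every stage — essentially an optimal control of how log canonical thresholds decrease along the flag, with no waste at the intermediate dimensions. It is precisely this numerical sharpening that is known only up to $\dim X = 5$ \cite{EL1, Kaw} and that I expect to be the true crux of any complete proof.
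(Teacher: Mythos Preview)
The statement you are addressing is labeled in the paper as a \emph{Conjecture}, not a theorem, and the paper offers no proof of it. Indeed, the introduction explicitly says that ``Fujita's base point free conjecture remains in general still open''; the paper's contribution is Algorithm~\ref{algo:fujitabpf}, which \emph{tests} the conjecture for a given $\QQ$-factorial Mori dream space by reducing the infinite verification to finitely many monoid-membership checks via the conductor ideal and the facet-parallel machinery of Setting~\ref{set:algo}. There is therefore no ``paper's own proof'' to compare your proposal against.

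As for the proposal itself: what you outline is the standard multiplier-ideal/Nadel-vanishing strategy of Ein--Lazarsfeld and Kawamata, and you are candid that it does not reach the conjectural bound. Your own final paragraph identifies the gap: the inductive cutting-down consumes roughly $\dim Z_i$ worth of $\mathcal{L}$ at each step, giving $\lambda \le \binom{n+1}{2}$ rather than $\lambda < n+1$, and sharpening this to the optimal bound is exactly the open problem. So the proposal is not a proof of the conjecture but a correct summary of the known partial approach together with an honest statement of where it stalls. That is fine as context, but it should not be presented as a proof attempt for Conjecture~\ref{conj:bpf}, since neither you nor the paper (nor anyone, beyond dimension five) has one.
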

In order to test whether a~$\QQ$-factorial Mori dream 
space~$X$ with known canonical class fulfills 
Fujita's base point free conjecture,
we need to test whether $\mathcal{K}_X + m  \mathcal{L}$ is an 
element of~$\BPF(X)$ for all $m \geq \dim(X)+1$ and for all 
ample Cartier divisor classes $\mathcal{L}$.
Since we can only carry out finitely many tests, we encouter 
two problems: firstly, we need to bound $m$ and secondly, we 
need to find a finite validation set 
of Cartier divisor classes $\mathcal{L}$.
In this section, we introduce our solution to these
problems and also present some examples of
applying our test algorithm.\\

\begin{remark}~\label{rem:knownKX}
Algorithm~\ref{algo:fujitabpf} applies to Mori dream spaces with 
known canonical class.
For instance, if~${\rm Cox}(X)$ is a complete intersection, 
there is a concrete formula for the canonical class 
in terms of generators and relations of~${\rm Cox}(X)$~\cite[Proposition 3.3.3.2]{ADHL}.
Note that all irreducible normal rational projective 
varieties with a torus action of complexity one 
have a complete intersection Cox ring~\cite[Proposition 1.2]{HaHeSu}.
In addition, there are formulas for the canonical class
of spherical varieties, see~\cite{Brion, Luna}.
\end{remark}

\begin{construction}\label{facetpar}
Let $K^0$ be a lattice. Consider an $s$-dimensional cone 
$\sigma \subseteq K^0_{\QQ}$ with some facet 
$F\preceq\sigma$.
Let $\varphi\colon K^0\to \ZZ^n$ 
be an isomorphism of $\ZZ$-modules
such that 
$\varphi(\sigma)\subseteq\cone(e_1,\ldots,e_{s})$ 
and
$\varphi(F)\subseteq\cone(e_1,\ldots,e_{s-1})$
holds, where $e_1,\ldots,e_n$ denote the canonical base vectors
of the rational vector space $\QQ^n$.
For any~$k\in\ZZ$ 
we call~$\tau\sei\varphi^{-1}(
\tilde{\tau})$ the \emph{k-th facet parallel} 
of $F$, where we set
$$
\tilde{\tau}
\; \sei \;
\big(
\lin_{\QQ}(\varphi(F))
\ +\
k e_s
\big)
\, .
$$ 
\end{construction}

\begin{figure}[ht]
\begin{tikzpicture}[scale=0.6]

     
    \path[fill=gray!60!] (0,0)--(5.7,-1)--(6.5,1)--(0,0); 
    
    \path[fill=gray!30!] ($(-2,5.2)+(1,1)$) -- 
    ($(-2,5.2) + (-1,0)
    -0.57*(4, 7) - 0.85*(6.5,1)
    +0.57*(4, 7) + 0.85*(5.7,-1)
    $) 
    -- ($0.57*(4, 7) + 0.85*(5.7,-1) +(2,-.075)$)
    --($0.57*(4, 7) + 0.85*(6.5,1)+(3,-.075)+(1,1)$);
    
    \draw[thick, -latex]  (0,0) -- ($1.05*(5.7,-1)$); 
    \draw[thick, -latex]  (0,0) -- ($1.05*(6.5,1)$);
    \node[] at (4.5,0) {\small{$\varphi(F)$}};     
    \node[draw,circle,inner sep=.8pt,fill=black] at (0,0) {}; 
    \node[below] at (0,0) {\small{$0$}};       
    \coordinate (esm1) at (.4*6.5, .4*1);
    \node[above] at (esm1) {\tiny{$e_{s-1}$}};   
    \node[draw,circle,inner sep=.8pt,fill=black] at (esm1) {};  
    \coordinate (e1) at (.2*5.7,.2*-1);
    \node[below] at (e1) {\tiny{$e_{1}$}}; 
    \node[draw,circle,inner sep=.8pt,fill=black] at (e1) {};  
    \draw[thick, -latex]  (0,0) -- ($0.8*(4,7)$); 

    \draw[]  ($0.57*(4,7)$) -- ($0.57*(4, 7) + 0.85*(5.7,-1)$);
    \draw[]  ($0.57*(4,7)$) -- ($0.57*(4, 7) + 0.85*(6.5,1)$); 
    \node[] at (9,4.5) {\small{$\tilde{\tau}$}};
  
    \draw[thin]  (0,0) -- (0,5.5);
    \node[left] at (0,1) {\tiny{$e_{s}$}};
    \node[draw,circle,inner sep=.8pt,fill=black] at (0,1) {}; 
      
    \draw [thin, dashed]  (-.1, 4) -- (2.3, 4);
    \node[left] at (0,4) {\tiny{$k e_{s}$}};
  \end{tikzpicture}
\end{figure}

\begin{setting}\label{set:algo}
Let $X$ be a~$\QQ$-factorial
Gorenstein Mori dream space and 
consider the base point free monoid
$S \sei \BPF(X) \subseteq K \sei \Pic(X)$.
We denote by
$F_1,\ldots,F_r$ the facets of 
$\sigma \sei  \cone(w^0 \otimes 1; \ w\in S) \subseteq K^0_{\QQ}$.
Let $1\le i\le r$
and let~$m_1,\ldots,m_{n_i} \in S$ be those elements such that 
$m_j^0$ is minimal with the property that $m_j^0 \otimes 1$ 
is contained in a ray of $F_i$.
Consider the polytope
$$
G_i
\ \sei \ 
\{ \sum_{j=1}^{n_i} a_j \, (m_j^0 \otimes 1); \ \;
 a_j \in \QQ, \ 0\le a_j\le 1 \}
 \ \subseteq \ 
 F_i
$$
as indicated in the figure below
and let $\rho_1,\ldots\rho_{t_i}$ be the 
rays of~$\sigma$ that are not contained in $F_i$. 
We denote by $\tau_i^k$ the k-th facet parallel of $F_i$.
For each facet parallel $\tau_i^k$ with~$k\in \ZZ_{\ge 0}$, 
we denote by $p_j^k\in K_{\QQ}, \, 1\le j\le t_i,$ the point that is 
the intersection of $\rho_j$ and~$\tau_i^k$.
With the canonical embedding 
$\iota_0\colon K^0 \to K^0_{\QQ}, \, w\mapsto w\otimes 1$,
we define
\begin{eqnarray*}
P_i^k 
& \sei &
\left({\rm conv}(p_1^k, \ldots, p_{t_i}^k) + G_i \right)
\; \cap \;  
\sigma^\circ
 \ \subseteq \ 
 \tau_i^k \
\ \text{ and } \\
Gp_i^k 
& \sei &
\iota_0^{-1}
\left( 
P_i^k
\right)
\ \times \
K^{\rm tor}
\ \subseteq \ 
K
\end{eqnarray*}
for all $k\in \ZZ_{\ge 0}$,
where $\sigma^\circ$ denotes the relative interior of $\sigma$.
Consider the canonical class $\mathcal{K}_X\in K$ of~$X$.
Since~$S\subseteq K$ is spanning, there is
an element $C \in c(\tilde{S}/S)$.
For $1\le i\le r$ let $\alpha_i\in\ZZ$ such 
that~$(-\mathcal{K}_X^0+C^0) \otimes 1 \in \tau_i^{\alpha_i}$ holds and set
$\nu \sei \max(\alpha_i; \ 1\le i\le r)$.
Note that~$\alpha_i$ may be negative.
\end{setting}

\begin{figure}[ht]
\begin{tikzpicture}[scale=0.6]
    \path[fill=gray!60!] (0,0)--(5.7,-1)--(5,2)--(0,0); 
    \draw[thick, -latex]  (0,0) -- ($1.05*(5.7,-1)$); 
    \draw[thick, -latex]  (0,0) -- ($1.05*(5,2)$);
    \node[] at (4.5,.5) {$F_i$};     
    \node[draw,circle,inner sep=.8pt,fill=black] at (0,0) {}; 
    \node[below] at (0,0) {\small{$0$}};       
    \coordinate (mni) at (.4*5, .4*2);
    \node[above] at (mni) {\tiny{$m_{n_i}$}};   
    \node[draw,circle,inner sep=.8pt,fill=black] at (mni) {};  
    \coordinate (m1) at (.25*5.7,.25*-1);
    \node[below] at (m1) {\tiny{$m_{1}$}}; 
    \node[draw,circle,inner sep=.8pt,fill=black] at (m1) {};  
    \coordinate (m1pmni) at ($(m1)+(mni)$);
    \draw[thin]  (mni) -- (m1pmni) -- (m1); 
    \node[] at (1.4,.15) {\small{$G_i$}}; 
    \draw[thin]  ($ (-4, 2)+(mni)$) -- ($ (-4, 2)+(m1pmni)$) --
    ($ (m1pmni)+(-4, -3)$) -- ($(-4, -3)+(m1)$); 
    \path[fill=gray!15!] 
           (-4, -3) -- (-5, -.5) -- (-4, 2) -- ($ (-4, 2)+(mni)$) 
           -- ($ (-4, 2)+(m1pmni)$) -- ($ (m1pmni)+(-4, -3)$) 
           -- ($(-4, -3)+(m1)$) -- (-4, -3);
    \node[] at (-3,0.2) {\small{$P_i^k$}}; 
    \draw [thin] (0,0) --  (-.21*4, .21*2);
    \draw [thin, dashed]  (-.21*4, .21*2) -- (-4, 2);
    \draw [thin, -latex] (-4, 2) --  (-1.5*4, 1.5*2);
    \node[above,right] at (-1.5*4, 1.5*2+.1) {\tiny{$\rho_{t_i}$}};
    \draw [thin] (0,0) --  (-.17*5, -.17*.5);
    \draw [thin, dashed]  (-.17*5, -.17*.5) -- (-5, -.5);
    \draw [thin, -latex] (-5, -.5) --  (-1.5*5, -1.5*.5);
    \node[above, right] at (-1.5*5, -1.5*.5+.3) {\tiny{$\rho_{2}$}};
    \draw [thin] (0,0) --  (-.21*4, -.21*3);
    \draw [thin, dashed]  (-.21*4, -.21*3) -- (-4, -3);
    \draw [thin, -latex] (-4, -3) --  (-1.5*4, -1.5*3);
    \node[above, right] at (-1.5*4, -1.5*3-.1) {\tiny{$\rho_{1}$}};
    \draw [thick, -latex]  (-4, 2) -- (-4+5, 2+2);
    \draw [thick] (-4, 2)-- (-5, -.5);
    \draw [thick] (-5, -.5) -- (-4, -3);
    \draw [thick, -latex] (-4, -3) -- (-4+5.7, -3-1);
    \node[above] at (-4,2) {\tiny{$p_{t_i}^k$}};
    \node[draw,circle,inner sep=.8pt,fill=black] at (-4,2) {};   
    \node[above] at (-5.2,-.6) {\tiny{$p_{2}^k$}}; 
    \node[draw,circle,inner sep=.8pt,fill=black] at (-5,-.5) {};  
    \node[below] at (-4,-3) {\tiny{$p_{1}^k$}}; 
    \node[draw,circle,inner sep=.8pt,fill=black] at (-4,-3) {};   
    \node[] at (0.5,3.3) {\small{$\tau_i^k$}};  
  \end{tikzpicture}
\end{figure}

The above mentioned problems, namely bounding $m$ and finding
a finite validation set of Cartier divisor classes, are tackled
by computing a point of the conductor ideal of 
$\BPF(X)$ and by only considering the Cartier divisor
classes defining a point in the polytopes $P_i^k$ of the first few
facet parallels $\tau_i^k$, $k\ge 0$, of each facet $F_i \preceq \sigma$.

\begin{algo}[fujitaBpf] \label{algo:fujitabpf}
\textit{Input:} A~$\QQ$-factorial
Mori dream space~$X$ and its canonical 
class~$\mathcal{K}_X$.\\
\textit{Output:} \textit{True} if $X$ fulfills Fujita's base point free conjecture,
i.e.~if $\mathcal{K}_X + m  \mathcal{L}$ is base point free 
for all $m \geq \dim(X)+1$ and all ample Cartier divisor classes $\mathcal{L}$.
Otherwise, \textit{false} is returned.
\begin{itemize}
\item 
If $X$ is not Gorenstein return \textit{false}.
\item 
Use Algorithm~\ref{algo:genintmon} to compute generators 
of $S\sei\BPF(X)$.
\item
Use Algorithm~\ref{algo:pointcondid} to compute a point
$C\in c(\tilde{S}/S)$. 
\item 
Compute the facets $F_1,\ldots,F_r$ of \cone(S) and
$\alpha_1,\ldots,\alpha_r$ as well as $\nu$ as defined in Setting~\ref{set:algo}.
\item
For each $1\le i\le r$ do
\begin{itemize}
\item for each $\dim(X)+1\le m \le\nu-1$ do
\begin{itemize}
\item for each 
$1\le k \le \lfloor\frac{\alpha_i -1}{m} \rfloor$, where ~$\lfloor\cdot\rfloor$ denotes the floor function, use 
Algorithm~\ref{algo:inmon} to
test whether $\mathcal{K}_X+m \, Gp_i^k \subseteq S$ holds.
\end{itemize}
\end{itemize}
\item Return \textit{false} if there is $1\le i\le r$, 
$\dim(X)+1\le m\le \nu-1$, 
$1\le k \le  \lfloor \frac{\alpha_i -1}{m} \rfloor$, 
and $\mathcal{L} \in Gp_i^k$ such that $\mathcal{K}_X+m  \mathcal{L}$ 
is not contained in $S$. Otherwise, return \textit{true}.
\end{itemize}

\end{algo}

Before presenting a proof of Algorithm~\ref{algo:fujitabpf},
we first give two examples of applying it to 
Mori dream spaces.

\begin{example}\label{ex:fujitabpf1}
Here we give an example of a six-dimensional 
smooth Mori dream space that does fulfill
Fujita's base point free conjecture.\\
\begingroup
\footnotesize
\setlength{\arraycolsep}{4pt}
\begin{enumerate}[leftmargin=3em]
\item[\tt >] \tt{Q $\sei$ matrix([[1,1,2,0,1,1,1,-1,0,0],[0,0,-1,1,0,-1,-1,1,0,0],[0,36,36,0,18,49,49, -48,1,1]]);}
\out{
Q =
\begin{array}{c}
\left[\!\!
\begin{array}{cccccccccc}
1 & 1 & 2 & 0 & 1 & 1 & 1 & -1 & 0 & 0
\\ 
0 & 0 & -1 & 1 & 0 & -1 & -1 & 1 & 0 & 0
\\
0 & 36 & 36 & 0 & 18 & 49 & 49 & -48 & 1 & 1
\end{array}
\!\!\right]
\end{array}
}
\item[\tt >] \tt{RL := [T[1]*T[2]+T[3]*T[4]+T[5]\textasciicircum2];}
\out{RL := \left[T_1T_2+T_3T_4+T_5^2\right]}
\item[\tt >] \tt{R $\sei$ createGR(RL,vars(10),[Q]);}
\out{R\sei GR(10, 1, [3, []])}
\item[\tt >] \tt{X $\sei$ createMDS(R,[1,1,50]);}
\out{X \sei MDS(10,1,6,[3,[]])}
\item[\tt >] \tt{MDSissmooth(X);}
\out{true}
\end{enumerate}
\endgroup
\noindent
Since $R={\rm Cox}(X)$ is a complete intersection, we may use the formula
presented in~\cite{ADHL} to compute the canonical class of~$X$:
we obtain $\mathcal{K}_X = [-4, 1, -106] \in \ZZ^3$.
\vspace{0.1cm}
\begingroup
\footnotesize
\setlength{\arraycolsep}{4pt}
\begin{enumerate}[leftmargin=3em]
\item[\tt >] \tt{fujitaBPF(X,[-4,1,-106]);}
\out{true}
\end{enumerate}
\endgroup
\noindent 
To obtain this result the algorithm performs the following steps:
\begin{itemize}
\item 
First Algorithm~\ref{algo:genintmon} is used to compute the three generators 
of $[0, 0, 1], [0, 1, 0]$ and~$[1, 0, 49]$ of $\BPF(X)\subseteq\ZZ^3$.
\item
Then Algorithm~\ref{algo:pointcondid} computes the point
$C \sei[0,0,0]\in c(\tilde{S}/S)$.  
\item 
The faces of $\cone(S)$ are given by
$F_1\sei\cone([0,1,0],[1,0,49])$, 
$F_2\sei\cone([0,0,1],[1,0,49])$, 
$F_3\sei\cone([0,1,0],[0,0,1])$.
The algorithms then computes $\alpha_1,\alpha_2,\alpha_3$
such that $-\mathcal{K}_X+C=[4,-1,106]$ defines a point
in~$\tau_i^{\alpha_i}$. 
We obtain $\alpha_1=-90$, $\alpha_2=-1$ and $\alpha_3=4$
as well as $\nu=4$. Note that $\alpha_3=4$ is just
the first coordinate of $-\mathcal{K}_X+C$.
\item
Since $\dim(X)+1=7> 4=\nu-1$ holds, the algorithm returns
 \textit{true}.
\end{itemize}
For a geometric description of~$X$, note that in the 
language of~\cite{Ca},
$X$ admits three elementary contractions two of which are birational small.
The other one is a birational divisorial contraction~$X\to Y$
contracting the divisor corresponding to the variable~$T_8$ of~${\rm Cox}(X)$. 
The variety~$Y$ is a smooth intrinsic quadric with generator degrees,
relation and semiample cone given by
\setlength{\arraycolsep}{4pt}
$$
Q \ = \
\left[
\begin{array}{cc|cc|c||cccc}
60& 0 & 48&12 & 30& 1 & 1 & 1 & 1\\
1 & 1 & 1 & 1 & 1 & 0 & 0 & 0 & 0
\end{array}
\right], 
\qquad 
g \ = \ T_1T_2+T_3T_4+T_5^2
$$
and~$\SAmple(X)=\cone((1,0),(60,1))$.
The center of~$\varphi$ is the intersection of~$Y$ and the toric
prime divisors corresponding to the variables~$T_8,T_9\in{\rm Cox}(Y)$.
Note that~$Y$ allows a closed embedding into the 
the projectivized split vector bundle
$$
\PP\big(
\mathcal{O}_{\PP_3} 
\oplus
\mathcal{O}_{\PP_3}(12)  
\oplus
\mathcal{O}_{\PP_3}(30)
\oplus
\mathcal{O}_{\PP_3}(48) 
\oplus
\mathcal{O}_{\PP_3}(60) 
\big)\, .
$$
\end{example}

\begin{example}\label{ex:fujitabpf2}
Here we give an example of a locally factorial 
variety with a torus action of complexity one that does not fulfill
Fujita's base point free conjecture.
Note that this represents a difference to the toric case,
where Fujino~\cite{fujino} presented a proof of Fujita's base
point free conjecture for toric varieties with arbitrary singularities.\\
\begingroup
\footnotesize
\setlength{\arraycolsep}{4pt}
\begin{enumerate}[leftmargin=3em]
\item[\tt >] \tt{Q $\sei$ matrix([[0,0,1,0,0,1,1,0,1],[1,1,0,1,1,0,1,1,2]]);}
\out{
Q =
\begin{array}{c}
\left[\!\!
\begin{array}{ccccccccc}
0 & 0 & 1 & 0 & 0 & 1 & 1 & 0 & 1
\\ 
1 & 1 & 0 & 1 & 1 & 0 & 1 & 1 & 2
\end{array}
\!\!\right]
\end{array}
}
\item[\tt >] \tt{RL := [T[1]*T[2]\textasciicircum 7*T[3]\textasciicircum 8
+T[4]*T[5]\textasciicircum 7*T[6]\textasciicircum 8+T[7]\textasciicircum 8];}
\out{RL := \left[T_1T_2^7T_3^8+T_4T_5^7T_6^8+T_7^8\right]}
\item[\tt >] \tt{R $\sei$ createGR(RL,vars(9),[Q]);}
\out{R\sei GR(9, 1, [2, []])}
\item[\tt >] \tt{X $\sei$ createMDS(R,[1,3]);}
\out{X \sei MDS(9,1,6,[2,[]])}
\item[\tt >] \tt{MDSisfact(X);}
\out{true}
\item[\tt >] \tt{MDSisquasismooth(X);}
\out{false}
\end{enumerate}
\endgroup
\noindent
Since ${\rm Cox}(X)$ is a complete intersection, we may use the formula
presented in~\cite{ADHL} to compute the canonical class of~$X$:
we obtain $\mathcal{K}_X = [4,0] \in \ZZ^2$.
\vspace{0.1cm}
\begingroup
\footnotesize
\setlength{\arraycolsep}{4pt}
\begin{enumerate}[leftmargin=3em]
\item[\tt >] \tt{fujitaBPF(X,[4,0]);}
\out{false}
\item[\tt >] \tt{isBasePointFree(X,[1,3]);}
\out{true}
\end{enumerate}
\endgroup
\noindent 
Note that Algorithm~\ref{algo:fujitabpf} returns false, i.e.~$X$ 
does not fulfill Fujita's base point free conjecture.
To obtain this result the algorithm performs the following steps:
\begin{itemize}
\item 
First Algorithm~\ref{algo:genintmon} is used to compute the generators 
$[0, 1]$ and~$[1,2]$ of $\BPF(X)\subseteq\ZZ^3$.
\item
Then Algorithm~\ref{algo:pointcondid} computes the point
$C \sei[0,0]\in c(\tilde{S}/S)$.  
\item 
The faces of $\cone(S)$ are given by
$F_1\sei\cone([1,2])$, 
$F_2\sei\cone([0,1])$.
The algorithms then computes $\alpha_1,\alpha_2$
such that $-\mathcal{K}_X+C=[-4,0]$ defines a point
in~$\tau_i^{\alpha_i}$. 
We obtain $\alpha_1=8$, $\alpha_2=-4$
and $\nu=8$. Note that $\alpha_2=-4$ is just
the first coordinate of $-\mathcal{K}_X+C$.
\item Then the algorithm performs the following steps:
\begin{itemize}
\item Since we have $\dim(X)+1=7\le m \le 7=\nu-1$, the algorithm
only needs to test the case~$m=7$.
\begin{itemize}
\item For $i=1$ we have $\lfloor\frac{\alpha_1 -1}{7}\rfloor =1$,
i.e.~only the case $k=1$ needs to be considered.
The algorithm yields  $ Gp_i^k= \{[1,3]\}$.
\item 
Now Algorithm~\ref{algo:inmon} is used to test whether 
$\mathcal{K}_X+m \, Gp_i^k \subseteq S$ holds.
We have $\mathcal{K}_X+7 \,[1,3] = [11,21]$
which is not contained in~$\cone(S)$. 
Thus Algorithm~\ref{algo:inmon} returns false.
\end{itemize}
\end{itemize}
\item
Hence the algorithm fujitaBPF returns \textit{false}.
\end{itemize}
\noindent
Note that the $\mathcal{K}_X+7 \,[1,3] = [11,21]$ is  
not semiample and thus not nef. 
Maeda proved in~\cite[Proposition 2.1]{Maeda} that
$\mathcal{K}_X + m \mathcal{L}$ is nef
for all~$m \geq \dim(X)+1$ and for all 
$\mathcal{L} \in \Ample(X)\cap \Pic(X)$ if~$X$ is an  
irreducible normal projective variety with
at most log terminal singularities.
Nevertheless, this example does not contradict the result 
of Maeda since~$X$ is not log terminal:
To see this, one can look at the relevant face~$\gamma_{134}$
and the corresponding affine variety
$
X_{\gamma_{134}}  \sei 
p_X(\overline{X}_{\gamma_{134}})
$,
where $p_X\colon \overline{X}^{\rm ss}(u) \to X$,
$u\sei[1,3]$,
denotes the good quotient with respect to the
$\Spec(\KK[\Cl(X)])$-action on~$\overline{X} \sei \Spec({\rm Cox}(X))$
and where we set
$
\overline{X}_{\gamma_{134}}  \sei 
\overline{X}_{T_1T_3T_4} 
$
as in~\cite[Construction 3.2.1.3.]{ADHL}.
By~\cite{ABHW},~$X_{\gamma_{134}}$ is log terminal only if 
the exponents of different monomials 
are platonic triples. Since this is not the case,
we conclude that~$X$ is not log terminal.

Observe that the base point free monoid
$\BPF(X)\subseteq\ZZ^2$ is saturated and thus the
ample class~$[1,3]$ is base point free.
Although $\mathcal{K}_X+7 \,[1,3] = [11,21]$
is not base point free on~$X$, a result 
of~\cite{LaPaSo} implies 
that~$\mathcal{K}_X+7 \,[1,3] = [11,21]$
is very ample and thus base point free on~$X^{\rm reg}$.

For a geometric description of~$X$, note that 
$X$ admits an elementary contraction~$\varphi\colon X\to \PP_4$ 
of fiber type in the sense of~\cite{Ca}
with fibers isomorphic to a hypersurface
of degree eight in~$\PP_3$. To be precise we have
$\varphi^{-1}(a)\cong V_{\PP_3}(a_1a_2^7T_0^8+a_3a_4^7T_1^8+T_2^8)$
where $a=[a_1,\ldots,a_5]\in\PP_4$ denotes a point of~$\PP_4$
in homogeneous coordinates and where $T_0,T_1,T_2,T_3$ denote the
coordinates of~${\rm Cox}(\PP_3)$.
In addition,~$X$ admits a closed embedding~$X\to Y$ into
the projectivized split vector~bundle
$$
Y \ = \
\PP\big(
\mathcal{O}_{\PP_4} 
\oplus
\mathcal{O}_{\PP_4} 
\oplus
\mathcal{O}_{\PP_4}(1)
\oplus
\mathcal{O}_{\PP_4}(2) 
\big)\, .
$$
\end{example}

We now turn to the proof of Algorithm~\ref{algo:fujitabpf}.

\begin{lemma}\label{boundm}
In the setting of \ref{set:algo}, the following are equivalent:
\begin{enumerate}
\item $\mathcal{K}_X+m \mathcal{L} \in S$ holds for all 
$m \geq \dim(X)+1$ and for all 
ample Cartier divisor classes $\mathcal{L}$, i.e.~$X$ fulfills 
Fujita's base point free conjecture.
\item $\mathcal{K}_X+m\mathcal{L} \in S$ holds for all 
$\nu -1 \ge m \geq \dim(X)+1$ and for all 
ample Cartier divisor classes $\mathcal{L}$.
\end{enumerate}
\end{lemma}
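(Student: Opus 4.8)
The implication (i) $\Rightarrow$ (ii) is immediate, since (ii) only asks for the conclusion of (i) on the smaller range $\dim(X)+1 \le m \le \nu-1$. So the whole point is the reverse implication, and there the only thing left to establish is that the defining inequality of Fujita's conjecture becomes \emph{automatic} once $m \ge \nu$: the plan is to show that for every ample Cartier class $\mathcal{L}$ and every $m \ge \nu$ one has $\mathcal{K}_X + m\mathcal{L} \in S$, with no test required. Granting this, statement (ii) supplies the remaining cases $\dim(X)+1 \le m \le \nu-1$, and (i) follows.

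To handle the range $m \ge \nu$ I would exploit the conductor point $C \in c(\tilde S/S)$ fixed in Setting~\ref{set:algo}. By definition $C + \tilde S \subseteq S$, so it suffices to prove that the element
$$ v \ \sei \ \mathcal{K}_X + m\mathcal{L} - C $$
lies in the saturation $\tilde S$. Since $S \subseteq K$ is spanning (Corollary~\ref{cor:bpfspanning}), the saturation is governed by its free part alone, namely $\tilde S = \{w \in K;\ w^0\otimes 1 \in \cone(S)\}$; I would record this as a short auxiliary observation, the nontrivial direction being that a lattice point whose free part lies in $\sigma \sei \cone(S)$ has a multiple in $S$ (a multiple of the free part lands in the free monoid by Gordan, and a further multiple by the order of $K^{\rm tor}$ corrects the torsion part). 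Thus the task reduces to checking $v^0 \otimes 1 \in \sigma$, i.e.~to verifying the facet inequalities of $\sigma$ for $v^0\otimes1$.

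For each facet $F_i \preceq \sigma$ let $u_i$ be its primitive inner normal; by Construction~\ref{facetpar} the $k$-th facet parallel is $\tau_i^k = \{x;\ \langle u_i, x\rangle = k\}$, so the defining relation $(-\mathcal{K}_X^0+C^0)\otimes 1 \in \tau_i^{\alpha_i}$ reads $\langle u_i,(-\mathcal{K}_X^0+C^0)\otimes1\rangle = \alpha_i$. Pairing $v^0\otimes 1$ with $u_i$ then gives
$$ \langle u_i, v^0\otimes 1\rangle \ = \ m\,\langle u_i,\mathcal{L}^0\otimes1\rangle \ - \ \alpha_i . $$
Since $\mathcal{L}$ is ample, $\mathcal{L}^0\otimes 1$ lies in the interior $\sigma^\circ = \Ample(X)$, and because $u_i$ and $\mathcal{L}^0$ are integral this forces $\langle u_i,\mathcal{L}^0\otimes1\rangle \ge 1$. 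Hence $\langle u_i, v^0\otimes1\rangle \ge m - \alpha_i \ge \nu - \alpha_i \ge 0$, where the last step uses $\nu = \max_i \alpha_i$. As this holds for every facet, $v^0\otimes 1 \in \sigma$, so $v \in \tilde S$ and $\mathcal{K}_X + m\mathcal{L} = C + v \in S$.

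The main obstacle I anticipate is not the inequality itself but the two supporting identifications. First, I must pin down that the facet-parallel level used to define $\alpha_i$ coincides with the value of the primitive integral normal $u_i$ (so that $\alpha_i$ is genuinely an integer and the bound $\nu \ge \alpha_i$ is meaningful), which I would extract directly from the coordinate normalization in Construction~\ref{facetpar}. Second, the passage from ``$\mathcal{L}$ ample'' to the integrality bound $\langle u_i,\mathcal{L}^0\otimes1\rangle \ge 1$ relies on $\mathcal{L}$ being Cartier (hence $\mathcal{L}^0 \in K^0$ integral) together with $\cone(S) = \SAmple(X)$ and $\Ample(X) = \SAmple(X)^\circ$. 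Everything else is the routine saturation and conductor bookkeeping sketched above.
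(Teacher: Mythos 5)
Your proof is correct and takes essentially the same route as the paper: for $m \ge \nu$ you compare facet-parallel levels of $m\mathcal{L}^0\otimes 1$ and $(-\mathcal{K}_X^0+C^0)\otimes 1$ to place $\mathcal{K}_X+m\mathcal{L}$ in $(C\otimes 1)+\cone(S)$, and then invoke the conductor point $C$ to land in $S$, exactly as the paper does. Your explicit primitive-normal reading of Construction~\ref{facetpar} and the spelled-out saturation description $\tilde S=\{w\in K;\ w\otimes 1\in\cone(S)\}$ are merely details the paper's proof leaves implicit.
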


\begin{proof}
Only implication ``(ii)$\Rightarrow$(i)'' needs to be proven. 
Let $m \ge \dim(X)+1$. If~$m \le \nu-1$ holds, then 
$\mathcal{K}_X+m \mathcal{L} \in S$ follows by~(ii).
Now assume that $m \ge \nu$ holds.
Note that since~$\mathcal{L}$ defines a point in the relative
interior of~$\sigma$ for all~$1\le i\le r$,
the multiple $m\mathcal{L}^0\otimes 1$ 
is contained in a facet parallel~$\tau_i^{\beta_i}$ with
$\beta_i\ge m \ge \nu$. Thus by definition of~$\nu$ as 
maximum over
all integers~$\alpha_i$ with 
$(-\mathcal{K}_X^0+C^0) \otimes 1 \in \tau_i^{\alpha_i}$,
we obtain  
$$
m \mathcal{L} \otimes 1 
\ \in \ 
((-\mathcal{K}_X+C) \otimes 1) \ + \ \cone(S)\,.
$$
Thus, $\mathcal{K}_X + m \mathcal{L}$
defines a point in $(C \otimes 1)  +  \cone(S).$
Since $C$ is an element of the conductor ideal~$c(\tilde{S}/S)$ 
of~$S\subseteq K$, we conclude
$
\mathcal{K}_X+ m \mathcal{L} 
 \in  
S$.
\end{proof}

\begin{lemma}\label{boundL1}
In the setting of \ref{set:algo}, the following are equivalent
for $m \in \{\dim(X)+1, \ldots, \nu -1\}:$
\begin{enumerate}
\item 
$\mathcal{K}_X+m \mathcal{L} \in S$ holds for all 
ample Cartier divisor classes $\mathcal{L}$.
\item 
For all $1\le i\le r$
and for all $1\le k \le \lfloor \frac{\alpha_i-1}{m} \rfloor$,
where~$\lfloor\cdot\rfloor$ denotes the floor function,
we have $\mathcal{K}_X+m \mathcal{L}\in S$ for all
$\mathcal{L} \in  \iota_0^{-1} (\tau_i^k \cap \sigma^\circ) \times K^{\rm tor}$.
\end{enumerate}
\end{lemma}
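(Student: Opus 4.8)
The plan is to recast the two statements in terms of the primitive support forms of the cone $\sigma$ and then to separate the ample classes into those that the conductor element $C$ forces into $S$ and those that genuinely require a test. For each facet $F_i\preceq\sigma$ I would fix the primitive integral linear form $\ell_i\colon K^0\to\ZZ$ that is non-negative on $\sigma$ and vanishes exactly on $F_i$. Since $S=\BPF(X)$ is spanning by Corollary~\ref{cor:bpfspanning}, the cone $\sigma=\cone(S)$ is full-dimensional; hence it equals the intersection of the half-spaces $\{\ell_i\ge 0\}$, and, reading off Construction~\ref{facetpar}, the $k$-th facet parallel is the full affine hyperplane $\{\ell_i=k\}$ in $K^0_\QQ$. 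In particular a lattice point $v\in K^0$ satisfies $v\otimes 1\in\tau_i^k$ precisely when $\ell_i(v)=k$, for every $k\in\ZZ$. Finally I would recall that a class $w\in K=\Pic(X)$ is an ample Cartier divisor class exactly when $w^0\otimes 1\in\sigma^\circ$, as $\sigma$ is the cone of semiample classes and $\Ample(X)$ is its relative interior; torsion is irrelevant here. These remarks identify the test family $\iota_0^{-1}(\tau_i^k\cap\sigma^\circ)\times K^{\rm tor}$ of~(ii) with the ample classes $\mathcal{L}$ satisfying $\ell_i(\mathcal{L}^0)=k$, and they show $\ell_i(\mathcal{L}^0)\ge 1$ for every ample $\mathcal{L}$ and every~$i$.

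Granting this dictionary, the implication ``(i)$\Rightarrow$(ii)'' is immediate, since each $\mathcal{L}$ appearing in~(ii) has free part in $\sigma^\circ$ and is therefore ample; thus~(ii) is merely the restriction of~(i) to a subfamily. For ``(ii)$\Rightarrow$(i)'' I would take an arbitrary ample $\mathcal{L}$ and distinguish two cases according to the heights $\ell_i(\mathcal{L}^0)$. If some facet $F_i$ satisfies $\ell_i(\mathcal{L}^0)\le\lfloor\tfrac{\alpha_i-1}{m}\rfloor$, then, setting $k\sei\ell_i(\mathcal{L}^0)$, ampleness yields $1\le k\le\lfloor\tfrac{\alpha_i-1}{m}\rfloor$ together with $\mathcal{L}\in\iota_0^{-1}(\tau_i^k\cap\sigma^\circ)\times K^{\rm tor}$, so that $\mathcal{K}_X+m\mathcal{L}\in S$ is exactly one of the memberships guaranteed by~(ii).

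The remaining case is where the argument is concentrated. Assume $\ell_j(\mathcal{L}^0)>\lfloor\tfrac{\alpha_j-1}{m}\rfloor$ for every facet $F_j$. The elementary identity $\lceil\tfrac{\alpha_j}{m}\rceil=\lfloor\tfrac{\alpha_j-1}{m}\rfloor+1$, valid for all integers $\alpha_j$ and all $m\ge 1$, converts this into $m\,\ell_j(\mathcal{L}^0)\ge\alpha_j$ for all $j$. Using the defining property $\ell_j(-\mathcal{K}_X^0+C^0)=\alpha_j$ of $\alpha_j$, I would then compute
$$
\ell_j\big(\mathcal{K}_X^0+m\mathcal{L}^0-C^0\big)
\ = \
m\,\ell_j(\mathcal{L}^0)-\alpha_j
\ \ge \
0
$$
for every facet $F_j$. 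As the conditions $\ell_j\ge 0$ cut out $\sigma$, the free part of $\mathcal{K}_X+m\mathcal{L}-C$ lies in $\sigma=\cone(S)$, whence $\mathcal{K}_X+m\mathcal{L}-C\in\tilde{S}$. Since $C\in c(\tilde{S}/S)$, this gives
$$
\mathcal{K}_X+m\mathcal{L}
\ = \
C+(\mathcal{K}_X+m\mathcal{L}-C)
\ \in \
C+\tilde{S}
\ \subseteq \
S\,,
$$
completing~(i).

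I expect the main obstacle to be identifying the threshold correctly: one must verify that $\lfloor\tfrac{\alpha_i-1}{m}\rfloor$ is exactly the boundary between heights that push $\mathcal{K}_X+m\mathcal{L}$ into $S$ via the conductor and heights that still need checking. This is precisely where the definition of $\alpha_i$, the conductor property of $C$, and the ceiling--floor identity must interlock, and it also transparently absorbs the facets with $\alpha_i\le 0$: for these the index range in~(ii) is empty, while $m\,\ell_i(\mathcal{L}^0)\ge m\ge\alpha_i$ holds automatically, so such facets never obstruct membership. By comparison, the dictionary between the facet parallels and the forms $\ell_i$ and the description of the ample classes are routine once Construction~\ref{facetpar}, Setting~\ref{set:algo}, and the relevant facts from~\cite{ADHL} are in hand.
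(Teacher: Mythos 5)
Your proof is correct and takes essentially the same route as the paper's: the trivial direction plus a two-case split on the facet heights of an ample class with threshold $\lfloor\frac{\alpha_i-1}{m}\rfloor$, where below-threshold classes are covered by~(ii) and above-threshold classes satisfy $m\,\ell_i(\mathcal{L}^0)\ge\alpha_i$ for all $i$, so that $\mathcal{K}_X+m\mathcal{L}\in C+\tilde{S}\subseteq S$ by the conductor property. Your support-form dictionary merely makes explicit what the paper phrases as membership of $\mathcal{L}^0\otimes 1$ in the facet parallels $\tau_i^{\beta_i}$, and your verification that a class of $K$ with free part in $\cone(S)$ lies in $\tilde{S}$ spells out a step the paper leaves implicit.
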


\begin{proof}
Only implication ``(ii)$\Rightarrow$(i)'' needs to be proven. 
Consider an ample Cartier divisor class $\mathcal{L}$, i.e.
$$\mathcal{L} \ \in \ 
\iota_0^{-1}(\sigma^\circ) \times K^{\rm tor}$$
holds. Let $\beta_1,\ldots,\beta_r \in \ZZ_{> 0}$ such 
that~$\mathcal{L}^0 \otimes 1 \in \tau_i^{\beta_i}$ holds.
If~$\beta_i \le \lfloor \frac{\alpha_i-1}{m} \rfloor$ holds
for some $1\le i\le r$, then 
$\mathcal{K}_X+m \mathcal{L} \in S$ follows by~(ii).
Now assume that~$\beta_i > \lfloor \frac{\alpha_i-1}{m} \rfloor$
holds for all $1\le i\le r$.
We obtain
$m\beta_i \ge \alpha_i$ for all $1\le i\le r$.
Recall that
$(-\mathcal{K}_X^0+C^0) \otimes 1 \in \tau_i^{\alpha_i}$ holds for all~$1\le i\le r$.
Thus~$m\beta_i \ge \alpha_i$ for all $1\le i\le r$ shows that
$$
m\mathcal{L}\otimes 1
\ \in \
((-\mathcal{K}_X+C) \otimes 1 ) \ +\  \cone(S) 
$$
holds. Thus, $\mathcal{K}_X + m \mathcal{L}$
defines a point in $(C \otimes 1)  +  \cone(S).$
Since $C$ is an element of the conductor ideal~$c(\tilde{S}/S)$ 
of~$S\subseteq K$, we conclude
$
\mathcal{K}_X+ m \mathcal{L} 
 \in  
S$.
\end{proof}

\begin{lemma}\label{boundL2}
Recall that in the setting of \ref{set:algo}, we defined 
by~$m_1,\ldots,m_{n_i} \in S$ those elements such that 
$m_j^0$ is minimal with the property that $m_j^0 \otimes 1$ 
is contained in a ray of~$F_i$.
Let $1\le i\le r, \;1\le k \le \lfloor \frac{\alpha_i-1}{m} \rfloor$
and consider an ample Cartier divisor class
$\mathcal{L} \in  \iota_0^{-1} (\tau_i^k \cap \sigma^\circ) \times K^{\rm tor}$.
Then there are $y \in Gp_i^k$ and $a_j \in \ZZ_{\ge 0}$
such that we have
$$
\mathcal{L}
\ = \
y + \sum_{j=1}^{n_i} a_j m_j \, .
$$
\end{lemma}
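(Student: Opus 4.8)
The plan is to work first with the free parts in $K^0_\QQ$ and to restore the torsion only at the very end, where it causes no trouble since $Gp_i^k$ carries the full torsion factor $K^{\rm tor}$. Writing $x \sei \mathcal{L}^0\otimes 1$, the hypothesis reads $x \in \tau_i^k\cap\sigma^\circ$. The first step is to record the cross-section identity
$$
\tau_i^k \cap \sigma \ = \ \conv(p_1^k,\ldots,p_{t_i}^k) + F_i \, .
$$
This follows by writing a point of $\sigma$ at height $k$ (in the coordinate $e_s$ of Construction~\ref{facetpar}) in terms of the generators $m_1^0\otimes 1,\ldots,m_{n_i}^0\otimes 1$ of $F_i$ together with the generators of the rays $\rho_1,\ldots,\rho_{t_i}$: the $F_i$-generators sit at height $0$, while rescaling each $\rho_j$-generator to height $k$ turns the total $\rho$-contribution into a convex combination of the piercing points $p_j^k$. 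Since $k\ge 1$ and $\sigma$ is full-dimensional, the facet $F_i$ (at height $0$) misses $\tau_i^k$, so intersecting with the open cone simply passes to the relative interior and $x \in \operatorname{relint}(\conv(p_1^k,\ldots,p_{t_i}^k)+F_i)$.

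Next I would invoke the Minkowski-sum rule $\operatorname{relint}(A+B)=\operatorname{relint}(A)+\operatorname{relint}(B)$ to split $x = q + f$ with $q \in \operatorname{relint}(\conv(p_1^k,\ldots,p_{t_i}^k))$ and $f \in F_i^\circ$, and then expand $f = \sum_{l=1}^{n_i} c_l\,(m_l^0\otimes 1)$ with \emph{all} $c_l > 0$, which is possible precisely because $f$ lies in the relative interior $F_i^\circ = \cone(m_1^0\otimes 1,\ldots,m_{n_i}^0\otimes 1)^\circ$. This is essentially a Gordan-type fundamental-domain reduction, and its heart is bringing $f$ into the box $G_i$ while keeping the reduced point inside $\sigma^\circ$. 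I expect this interior-preservation to be the only genuine obstacle: the naive choice $a_l\sei\lfloor c_l\rfloor$ can fail, since a $c_l$ that is a positive integer produces a vanishing remainder coefficient and pushes the reduced point onto $\partial\sigma$. The fix is to set $a_l \sei \lceil c_l\rceil - 1 \in \ZZ_{\ge 0}$, so that the remainders $r_l \sei c_l - a_l$ all lie in $(0,1]$. Then $\sum_l r_l\,(m_l^0\otimes 1)\in G_i$, and because every $r_l$ is strictly positive this vector lies in $F_i^\circ$; together with $q\in\operatorname{relint}(\conv(p_j^k))$ this gives
$$
y^0\otimes 1 \ \sei \ q + \sum_{l=1}^{n_i} r_l\,(m_l^0\otimes 1) \ \in \ \operatorname{relint}\big(\conv(p_1^k,\ldots,p_{t_i}^k)\big) + F_i^\circ \ = \ \tau_i^k\cap\sigma^\circ \, ,
$$
while at the same time $y^0\otimes 1 \in \conv(p_1^k,\ldots,p_{t_i}^k)+G_i$; hence $y^0\otimes 1 \in P_i^k$.

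Finally I would assemble the claimed decomposition. By construction $y^0 \sei \mathcal{L}^0 - \sum_l a_l m_l^0$ is a genuine element of the lattice $K^0$ with $y^0\otimes 1 \in P_i^k$, so $y^0 \in \iota_0^{-1}(P_i^k)$. Setting $y^{\rm tor}\sei \mathcal{L}^{\rm tor} - \sum_l a_l m_l^{\rm tor}\in K^{\rm tor}$ and $y\sei(y^0,y^{\rm tor})$, the full torsion factor in $Gp_i^k=\iota_0^{-1}(P_i^k)\times K^{\rm tor}$ guarantees $y\in Gp_i^k$, and comparing free and torsion parts separately yields $\mathcal{L} = y + \sum_{l=1}^{n_i} a_l m_l$ with $a_l\in\ZZ_{\ge 0}$, as required.
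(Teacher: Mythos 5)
Your argument is correct, and it reaches the paper's conclusion by a noticeably cleaner route at the one delicate point. The skeleton agrees with the paper's proof: both start from the cross-section identity $\tau_i^k\cap\sigma=\conv(p_1^k,\ldots,p_{t_i}^k)+F_i$ (the paper writes $\cone(G_i)$, which equals $F_i$), decompose $\mathcal{L}^0$ accordingly, reduce the $F_i$-component modulo $\ZZ_{\ge 0}$-multiples of the $m_j$ into the box $G_i$, and restore torsion at the end exactly as you do. The difference is how the interior condition $y^0\otimes 1\in\sigma^\circ$ is secured. The paper rounds an arbitrary nonnegative decomposition with floors $\lfloor b_\ell\rfloor$; since the remainder can then land on $\partial\sigma$ (precisely the failure mode you identify), it needs a second case in which one copy of each $m_j$ with $\lfloor b_j\rfloor>0$ is added back, and interiority of the corrected point $y^\prime$ is proved by a face-combinatorial argument: no proper face of $\sigma$ contains both $y^0\otimes 1$ and $\sum_{j\le \ell_0}m_j^0\otimes 1$, hence their sum is interior. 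You instead make the decomposition strictly interior from the start --- $q$ in the relative interior of $\conv(p_1^k,\ldots,p_{t_i}^k)$ and all $c_l>0$ --- which is legitimate by the standard rules $\operatorname{relint}(C_1\cap C_2)=\operatorname{relint}(C_1)\cap\operatorname{relint}(C_2)$ (applicable here because $\mathcal{L}^0\otimes 1$ witnesses that the relative interiors meet, which is the precise justification behind your slightly informal ``passes to the relative interior'' step) and $\operatorname{relint}(A+B)=\operatorname{relint}(A)+\operatorname{relint}(B)$, and then you round by $a_l\sei\lceil c_l\rceil-1$ so every remainder $r_l$ lies in $(0,1]$; interiority becomes automatic and the case distinction disappears. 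What each approach buys: the paper's floor-and-repair argument is self-contained cone combinatorics, whereas yours is shorter and uniform at the cost of invoking the relative-interior calculus for convex sets and the fact that $\operatorname{relint}(F_i)$ consists exactly of the strictly positive combinations of the $m_j^0\otimes 1$; your version also delivers directly that $y^0\otimes 1\in\tau_i^k\cap\sigma^\circ$, sidestepping the paper's somewhat delicate localization of $y^0\otimes 1$ in its boundary case.
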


\begin{proof}
Observe that $\sigma \cap \tau_i^k = 
{\rm conv}(p_1^k,\ldots,p_{t_i}^k) +\cone(G_i)$ holds.
Hence there are rational numbers 
$a_j, \, b_\ell \in \QQ_{\ge 0}, \, \sum_{j=1}^{t_i} a_j =1$, 
such that the free part of~$\mathcal{L}$ is given~by
$$
\mathcal{L}^0
\ = \
\sum_{j=1}^{t_i} a_j p_j^k +\sum_{\ell=1}^{n_i} b_\ell m_\ell^0
\, .
$$
We obtain $\mathcal{L} = y + 
\sum_{\ell=1}^{n_i} \lfloor b_\ell\rfloor \, m_\ell \; ~(\ref{boundL2}.1)$, 
where~$\lfloor\cdot\rfloor$ denotes the floor function and
where the free and the torsion part of $y$ are defined as
$$
y^0
\ \sei \ 
\sum_{j=1}^{t_i} a_j p_j^k
\ + \ 
\sum_{\ell=1}^{n_i} (b_\ell-\lfloor b_\ell\rfloor) \, m_\ell^0\, , 
\qquad
y^{\rm tor}
\ \sei \ 
\mathcal{L}^{\rm tor} 
\ - \ 
\sum_{\ell=1}^{n_i} \lfloor b_\ell\rfloor \, m_\ell^{\rm tor} \, .
$$
Note that~$y$ is an element of~$K$
 since we have $y=\mathcal{L}-\sum_{\ell=1}^{n_i} \lfloor b_\ell\rfloor \, m_\ell$, where
 $\mathcal{L}$ as well as the $m_\ell$, $1\le \ell \le n_i$, are elements of~$K$. 
If $y^0 \otimes 1 \in \sigma^\circ$ holds,~$(\ref{boundL2}.1)$ is 
the required representation of $\mathcal{L}$.
Now consider the case where $y^0 \otimes 1$ is not contained in~$\sigma^\circ$.
This means that $y^0\otimes 1 \in \left( \conv(p_1^k,\ldots, p_{t_i}^k) \setminus \sigma^\circ \right)$ holds.
Since $\mathcal{L}^0\otimes 1$ is contained in $\sigma^\circ$,
there is $1\le\ell\le n_i$ with $\lfloor b_\ell \rfloor \neq 0$.
Without loss of generality we assume that 
$\lfloor b_1 \rfloor,\ldots,\lfloor b_{\ell_0} \rfloor > 0$
and 
$\lfloor b_{\ell_0 +1} \rfloor=\ldots=\lfloor b_{\ell_{n_i}} \rfloor
=0$ hold for some $1\le\ell_0\le n_i$.
Then we have
$$
\mathcal{L}  = y^\prime + 
\sum_{j=1}^{\ell_0} \, ( \lfloor b_j\rfloor -1 ) \; m_j \,   \ (\ref{boundL2}.2) \, ,
\quad \text{ where } \quad 
y^\prime \ \sei \  y + \sum_{j=1}^{\ell_0} m_j 
$$
holds. In order to show that formula~$(\ref{boundL2}.2)$ is
the required representation of~$\mathcal{L}$,
it remains to prove that $y^\prime \in G_{p_i}^k$ holds.
Note that $y^\prime \in K$ holds since $y$ is an element of~$K$.
In addition, since
$y^0\otimes 1 \in \left( \conv(p_1^k,\ldots, p_{t_i}^k) \setminus \sigma^\circ \right)$
holds,~$y^\prime$ defines a point in $\conv(p_1^k,\ldots, p_{t_i}^k)+G_i$.
It remains to show that~$y^\prime$ defines a point in the relative interior
of~$\sigma$.
Recall that $\sum_{j=1}^\ell m_j^0\otimes 1$ is contained in the facet~$F_i$.
Furthermore, since we are in the case $y^0\otimes 1\notin\sigma^\circ$,
the point $y^0\otimes 1$ lies in a facet~$F_y$ of $\sigma$.
Since~$k\ge 1$ and~$y\in\iota_0^{-1}(\tau_i^k)$ hold,
we conclude that $y^0\otimes 1$ is not contained in~$F_i$,
i.e.~there is no face~$\kappa\preceq\sigma$
with~$y^0\otimes 1\in  \kappa$ and~$\sum_{j=1}^\ell m_j^0\otimes 1\in \kappa$.
Thus the sum $y^0+  \sum_{j=1}^\ell m_j^0$ defines a point 
in the relative interior of~$\sigma$.
As argued above, this shows that~$y^\prime$ is an element of~$G_{p_i}^k$,
which completes the proof.
\end{proof}

\begin{lemma}\label{boundL3}
In the setting of \ref{set:algo}, let $\dim(X)+1\le m\le\nu -1$, \, $1\le i\le r$
and $1\le k \le \lfloor \frac{\alpha_i-1}{m} \rfloor$.
Then the following are equivalent:
\begin{enumerate}
\item 
$\mathcal{K}_X+m \mathcal{L} \in S$ holds for all 
$\mathcal{L} \in  \iota_0^{-1} \left(\tau_i^k \cap \sigma^\circ\right) \times K^{\rm tor}$.
\item 
$\mathcal{K}_X+m \mathcal{L} \in S$ holds for all 
$\mathcal{L} \in Gp_i^k$.
\end{enumerate}
\end{lemma}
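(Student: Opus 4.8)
The plan is to settle the trivial inclusion directly and then reduce the general case to the finite set $Gp_i^k$ via Lemma~\ref{boundL2}, which already carries all the geometric content. The whole argument is essentially a bookkeeping consequence of that lemma together with the fact that $S$ is a monoid.

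First I would observe that the implication ``(i)$\Rightarrow$(ii)'' is immediate. By construction in Setting~\ref{set:algo} we have $P_i^k = ({\rm conv}(p_1^k,\ldots,p_{t_i}^k)+G_i)\cap\sigma^\circ \subseteq \tau_i^k\cap\sigma^\circ$, and therefore
$$
Gp_i^k \ = \ \iota_0^{-1}(P_i^k)\times K^{\rm tor} \ \subseteq \ \iota_0^{-1}(\tau_i^k\cap\sigma^\circ)\times K^{\rm tor}.
$$
Thus (i) asserts the membership $\mathcal{K}_X+m\mathcal{L}\in S$ for a set of classes that contains $Gp_i^k$, so it specialises at once to (ii).

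For the converse ``(ii)$\Rightarrow$(i)'' I would take an arbitrary $\mathcal{L}\in\iota_0^{-1}(\tau_i^k\cap\sigma^\circ)\times K^{\rm tor}$. Since $\tau_i^k\cap\sigma^\circ\subseteq\sigma^\circ$, such an $\mathcal{L}$ is an ample Cartier divisor class, so Lemma~\ref{boundL2} applies and yields $y\in Gp_i^k$ together with non-negative integers $a_1,\ldots,a_{n_i}$ with $\mathcal{L}=y+\sum_{j=1}^{n_i}a_j m_j$. Distributing the multiple $m$ gives
$$
\mathcal{K}_X+m\mathcal{L} \ = \ (\mathcal{K}_X+m y) \ + \ \sum_{j=1}^{n_i}(m a_j)\,m_j.
$$
By hypothesis (ii) the first summand $\mathcal{K}_X+m y$ lies in $S$, because $y\in Gp_i^k$. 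Each $m_j$ is an element of $S$ by the choice made in Setting~\ref{set:algo}, and the coefficients $m a_j$ are non-negative integers, so the remaining sum $\sum_{j}(m a_j)m_j$ is again an element of the monoid $S$. As $S$ is closed under addition, the whole expression lies in $S$, which is precisely assertion (i).

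I do not expect any genuine obstacle here: the delicate work of producing the representation $\mathcal{L}=y+\sum_j a_j m_j$ with $y\in Gp_i^k$ has already been carried out in Lemma~\ref{boundL2}, and the present statement is a formal corollary. The one point worth emphasising is that the ``correction classes'' $m_j$ used to push $\mathcal{L}$ into the finite set $Gp_i^k$ are themselves base point free, i.e.\ elements of $S$; this is exactly what allows them, after multiplication by the non-negative integer $m$, to be absorbed back into $S$ without affecting membership.
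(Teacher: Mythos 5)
Your proposal is correct and follows essentially the same route as the paper: the inclusion $Gp_i^k \subseteq \iota_0^{-1}\left(\tau_i^k \cap \sigma^\circ\right) \times K^{\rm tor}$ settles ``(i)$\Rightarrow$(ii)'', and your reduction via Lemma~\ref{boundL2} is exactly what the paper means when it calls ``(ii)$\Rightarrow$(i)'' an immediate consequence of that lemma. You merely make explicit the step the paper leaves tacit, namely writing $\mathcal{K}_X+m\mathcal{L}=(\mathcal{K}_X+my)+\sum_j (m a_j) m_j$ and absorbing the second summand into the monoid $S$ because the $m_j$ lie in $S$.
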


\begin{proof}
Since $Gp_i^k  \subseteq \iota_0^{-1} \left(\tau_i^k \cap
\sigma^\circ\right) \times K^{\rm tor}$ holds, 
only implication ``(ii)$\Rightarrow$(i)'' needs to be proven. 
Note that this is an immediate consequence of Lemma~\ref{boundL2}.
\end{proof}

\begin{proof}[Proof of Algorithm~\ref{algo:fujitabpf}]
We need to show that $X$ fulfills Fujita's base point free
conjecture 
if and only if the above algorithm returns \textit{true}.
This can be seen as follows:
if $X$ is not Gorenstein, then $\mathcal{K}_X+m\mathcal{L}$ 
is not a Cartier divisor class; in particular, it is not
base point free.
Now assume that $X$ is Gorenstein.
Since the embedded monoid
$\BPF(X)\subseteq\Pic(X)$ is spanning, we can apply
Algorithm~\ref{algo:pointcondid} and compute a point of its 
conductor ideal.
Lemma~\ref{boundm} shows that we can bound $m$ by $\nu-1$;
Lemmata~\ref{boundL1} and~\ref{boundL3} prove that the 
sets $Gp_i^k, \;1\le i\le r, \; 1\le k \le 
\lfloor\frac{\alpha_i -1}{m} \rfloor$, serve as validations 
sets of Cartier divisor classes.
\end{proof}


\end{document}